\providecommand{\U}[1]{\protect\rule{.1in}{.1in}}
\providecommand{\U}[1]{\protect \rule{.1in}{.1in}}
\newtheorem{theorem}{Theorem}[section]
\newtheorem{lemma}[theorem]{Lemma}
\newenvironment{proof}[1][Proof]{\noindent \textbf{#1.} }{\  \rule{0.5em}{0.5em}}
\numberwithin{equation}{section}
\begin{document}

\title{On the distribution of $\alpha p^2$ modulo one \\ with prime $p$ of a special form }

\author{Fei Xue\footnotemark[1] \,\,\,\,\,  \& \,\,\, Jinjiang Li\footnotemark[2]\,\,\,\,\,\,\,  \& \,\,\,
        Min Zhang\footnotemark[3]
                    \vspace*{-4mm} \\
     $\textrm{\small Department of Mathematics, China University of Mining and Technology\footnotemark[1]\,\,\,\footnotemark[2]}$
                    \vspace*{-4mm} \\
     \small  Beijing 100083, P. R. China
                     \vspace*{-4mm}  \\
     $\textrm{\small School of Applied Science, Beijing Information Science and Technology University\footnotemark[3]}$
                    \vspace*{-4mm}  \\
     \small  Beijing 100192, P. R. China  \vspace*{-4mm}  \\}

\footnotetext[2]{Corresponding author. \\
    \quad\,\, \textit{ E-mail addresses}:
     \href{mailto:fei.xue.math@gmail.com}{fei.xue.math@gmail.com} (F. Xue),
     \href{mailto:jinjiang.li.math@gmail.com}{jinjiang.li.math@gmail.com} (J. Li),\\
     \quad \qquad\qquad\quad\quad\quad\quad \,\,\,\,\,
     \href{mailto:min.zhang.math@gmail.com}{min.zhang.math@gmail.com} (M. Zhang).  }

\date{}
\maketitle

{\textbf{Abstract}}: Let $\mathcal{P}_r$ denote an almost--prime with at most $r$ prime factors, counted
according to multiplicity. In this paper, it is proved that for
$\alpha\in\mathbb{R}\backslash\mathbb{Q},\,\beta\in\mathbb{R}$ and $0<\theta<10/1561$, there exist infinitely
many primes $p$, such that $\|\alpha p^2+\beta\|<p^{-\theta}$ and $p+2=\mathcal{P}_4$, which constitutes an improvement
upon the previous result.

{\textbf{Keywords}}: Distribution modulo one; linear sieve; exponential sum; almost--prime

{\textbf{MR(2020) Subject Classification}}: 11J71, 11N36, 11L07, 11L20

\section{Introduction and main result}

Let $\mathcal{P}_r$ denote an almost--prime with at most $r$ prime factors, counted according to
multiplicity. The famous prime twins conjecture states that there exist infinitely many
primes $p$ such that $p+2$ is a prime, too. Up to now, this conjecture is still open,
but many approximations about this conjecture were established. One of the most interesting results is due to
Chen \cite{Chen-Jingrun-1973}, who showed, in 1973, that there exist infinitely many primes $p$ such that
$p+2=\mathcal{P}_2$.

In 1981, Heath--Brown \cite{Heath-Brown-1981} showed that there exist infinitely many arithmetic
progressions of four different terms, three of which are primes, and the fourth is $\mathcal{P}_2$.
In 2006, Green and Tao \cite{Green-Tao-2006} established that there exist infinitely many arithmetic
progressions consisting of three different primes $p_1<p_2<p_3$ such that $p_j+2=\mathcal{P}_2$ for each $j=1,2,3$.
Later, in 2008, Green and Tao \cite{Green-Tao-2008} showed that for any $k\geqslant3$ there exist infinitely many arithmetic progressions
consisting of $k$ different primes $p_1<p_2<\cdots<p_k$ such that $p_j+2=\mathcal{P}_2$ for each $j=1,2,\dots,k$.

Suppose that there is a problem including primes and let $r\geqslant 2$ be an integer. Having in mind Chen's result,
one may consider the problem with primes $p$, such that $p+2=\mathcal{P}_r$. Many authors investigated several
kinds of problems of this type , such as Peneva and Tolev \cite{Peneva-Tolev-1998}, Peneva \cite{Peneva-2000},
Tolev \cite{Tolev-1999,Tolev-2000-1,Tolev-2000-2}, etc.

Let $\alpha$ be a irrational real number and $\|x\|$ denote the distance from $x$ to the nearest
integer. Earlier work about the distribution of the fractional parts of the sequence $\{\alpha p\}$
was first considered by Vinogradov \cite{Vinogradov-book}, who showed that for any real number $\beta$, there
are infinitely many primes $p$ such that for $\theta=1/5-\varepsilon$, then
\begin{equation}\label{linear-case}
  \|\alpha p+\beta\|<p^{-\theta},
\end{equation}
where and below $\varepsilon$ denotes arbitrarily small positive number. After that, the first improvement on (\ref{linear-case}) was due to Vaughan \cite{Vaughan-1977}, who obtained $\theta=1/4$ in (\ref{linear-case}), and who also required an additional factor $(\log p)^8$ on the right hand side of (\ref{linear-case}).
Since then, many authors improved the upper bound of the exponent $\theta$, such as Harman \cite{Harman-1983-1,Harman-1996}, Jia \cite{Jia-1993,Jia-2000},
Heath--Brown and Jia \cite{Heath-Brown-Jia-2002}, etc. So far the best result is given by Matom\"{a}ki \cite{Matomaki-2009} with $\theta=1/3-\varepsilon$. Moreover, it seems very natural to consider the sequence $\{\alpha p^k\}$ for $k\geqslant2$, where $p$ denotes a prime variable. Also, many authors studied the fractional parts of the sequence $\{\alpha p^k\}$ for $k\geqslant2$, such as Baker and Harman \cite{Baker-Harman-1991}, Harman \cite{Harman-1983-2}, Wong \cite{Wong-1997}, etc.

In 2010, Todorova and Tolev \cite{Todorova-Tolev-2010} considered the distribution of $\alpha p$ modulo one with
primes of the form specified above, and showed that for $\theta=1/100$, there are infinitely many solutions in
primes $p$ to (\ref{linear-case}) such that $p+2=\mathcal{P}_4$. Later, Matom\"{a}ki \cite{Matomaki-2009-JNT}
showed that this result actually holds with $p+2=\mathcal{P}_2$ and $\theta=1/1000$. After that, Shi \cite{Shi-2012}
continue to improve the result of Matom\"{a}ki \cite{Matomaki-2009-JNT}, and showed that there are infinitely
many solutions in primes $p$ to (\ref{linear-case}) such that $p+2=\mathcal{P}_2$ and $\theta=3/200$.

Moreover, for the case $k=2$, Shi and Wu \cite{Shi-Wu-2013} established the result that there
exist infinitely many primes $p$, which satisfy $\|\alpha p^2+\beta\|<p^{-\theta}$, such that
$p+2=\mathcal{P}_4$ and $\theta=2/375-\varepsilon$.

In this paper, we shall continue to improve the result of Shi and Wu \cite{Shi-Wu-2013}
and establish the following theorem.

\begin{theorem}\label{Theorem}
  Suppose that $\alpha\in\mathbb{R}\backslash\mathbb{Q},\,\beta\in\mathbb{R}$ and $0<\theta<10/1561$. Then there
  exist infinitely many primes $p$, which satisfy $p+2=\mathcal{P}_4$, such that
\begin{equation*}
   \big\|\alpha p^2+\beta\big\|<p^{-\theta}.
\end{equation*}
\end{theorem}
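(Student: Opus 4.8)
The plan is to combine the circle method (Davenport–Heilbronn style for the irrational $\alpha$) with a weighted linear sieve à la Chen, as in Shi–Wu \cite{Shi-Wu-2013}, and to push the exponent by inserting sharper exponential-sum estimates. First I would fix a good rational approximation $|\alpha - a/q|\leqslant q^{-2}$ with $q$ large (possible since $\alpha\notin\mathbb{Q}$), set $X$ to be a large parameter, and introduce the weight $\Delta(n)=\max\bigl(0,\,\tfrac{1}{2}p^{-\theta}-\|\alpha n^2+\beta\|\bigr)$ (or its normalised Fourier-analytic analogue) so that detecting $\|\alpha p^2+\beta\|<p^{-\theta}$ becomes estimating $\sum_{X<p\leqslant 2X}\Delta(p)\,w(p)$ for a suitable sifting weight $w$. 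Expanding $\Delta$ in a Fourier series splits the problem into a main term (the zero frequency, which contributes $\asymp X^{1-\theta}$ times the sieve main term) and an error term $\sum_{0<|h|\leqslant H}\widehat{\Delta}(h)\sum_{n}e(h\alpha n^2+h\beta)w(n)$ with $H\approx X^{\theta}\log X$.

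**Sieve setup.** The next step is to run Chen's switching-principle weighted sieve on the sequence $\mathcal{A}=\{p+2: X<p\leqslant 2X,\ \|\alpha p^2+\beta\|<p^{-\theta}\}$, aiming for $p+2=\mathcal{P}_4$. Because the density of primes satisfying the fractional-part condition is $\asymp X^{-\theta}$, every arithmetic quantity — the level of distribution, the admissible ranges in Buchstab's identity, the loss in the switching step — has to be tracked together with the exponent $\theta$; the value $10/1561$ is precisely what emerges from optimising the weighted sieve inequality against the available level of distribution. I would establish a Bombieri–Vinogradov type bound for $\mathcal{A}$ in arithmetic progressions, i.e. show that $\sum_{d\leqslant D}\bigl|\sum_{\substack{n\in\mathcal{A}\\ n\equiv 0\,(d)}}1 - \tfrac{1}{\varphi(d)}\,|\mathcal{A}|\bigr|$ is small for $D=X^{\eta}$ with $\eta$ as large as possible; this reduces, via the Fourier expansion above, to bounding bilinear and linear exponential sums $\sum_{n}e(h\alpha n^2)$ twisted by the congruence and by the sieve's Type I/Type II coefficients.

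**The exponential sums.** The technical heart is bounding $\sum_{n\sim N}e(h\alpha n^2)$ and its bilinear (Type II) relatives $\sum_{m\sim M}\sum_{n\sim N}a_m b_n\, e(h\alpha m^2 n^2)$ uniformly for $h\leqslant H$ and over the progression moduli. For the linear sums I would use Weyl differencing / van der Corput together with the rational approximation to $h\alpha$; for the bilinear sums the standard tool is Cauchy–Schwarz followed by a second differencing, reducing to counting solutions of $h\alpha(m^2 n_1^2 - m^2 n_2^2)$ being small, which is governed by divisor-type and spacing arguments. The improvement over $\theta=2/375$ should come from a more efficient dissection of the $(M,N)$ ranges and a sharper large-sieve or double-large-sieve input at the Type II stage, so that the level of distribution $\eta$ is raised; feeding the new $\eta$ into the weighted linear sieve then yields $\theta<10/1561$.

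**Main obstacle.** The hard part will be making the exponential-sum estimates hold \emph{uniformly in $h$ up to $H\approx X^\theta$} while simultaneously keeping the modulus $d$ as large as the sieve needs: there is a genuine three-way tension between $H$, $D$, and the $(M,N)$ split, and any slack in one forces a loss in the others. Balancing these — in particular handling the minor-arc contribution where $h\alpha$ is close to a rational with moderate denominator, and absorbing the extra factor $q$ from $\|\alpha-a/q\|$ — is where the delicate optimisation lives, and it is precisely this optimisation that pins down the constant $10/1561$.
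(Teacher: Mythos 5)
Your outline does track the paper's general strategy (a Fourier expansion of the fractional-part indicator, a weighted linear sieve on $p+2$, and a Bombieri--Vinogradov type input reduced to Type I/Type II exponential sums with the quadratic phase $e(k\alpha n^2)$), but as written it has a structural gap in addition to leaving out all of the quantitative content. You never confront the fact that, $\alpha$ being merely irrational, the quality of its rational approximations is not at your disposal: the key bound for
\begin{equation*}
\sum_{d\le D}\xi(d)\sum_{0<|k|\le H}c(k)\sum_{\substack{X/2<p\le X\\ p+2\equiv 0 \ (\mathrm{mod}\ d)}}(\log p)\,e(\alpha p^2k)
\end{equation*}
cannot be established for \emph{every} large $X$ (consider a Liouville-type $\alpha$); in the paper it is proved only when $X$ is tied to the denominator $Q$ of a Dirichlet approximation $|\alpha-A/Q|<Q^{-2}$ (there $Q=X^{4138\theta/15}$), so the lower bound for the sifted sum, and hence the theorem, is obtained only along a sequence $X_j\to\infty$ — which is exactly what ``infinitely many primes'' requires, and is not an optional refinement. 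Your plan fixes ``a good rational approximation with $q$ large'' and then ``sets $X$ to be a large parameter'' as if these were independent choices, and speaks of a Bombieri--Vinogradov bound for the set $\mathcal{A}$ of primes satisfying the fractional-part condition; such a bound uniform in $X$ is not available, so the statement you intend to prove must be weakened to a $Q$-dependent sequence of $X$'s from the outset.

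Second, the sieve mechanism you invoke — Chen's switching principle — is neither needed for $\mathcal{P}_4$ nor supportable by the exponential-sum inputs you describe: switching would require asymptotics for the switched bilinear sums over products of two primes carrying the weight $e(k\alpha p^2)$, which you do not provide and which are substantially harder. What the paper (following Tolev) actually uses is the Richert/Kuhn logarithmic weight $\mathcal{W}_p=1-\kappa\sum_{z<q\le y,\ q\mid p+2}\bigl(1-\frac{\log q}{\log y}\bigr)$ together with upper and lower linear (Rosser) sieve bounds of level $D=X^{\delta}$; the conclusion $\Omega(p+2)\le 4$ then follows from the purely numerical inequality $1/\kappa+1/\rho<5$ with the specific parameters $\delta,\rho,\eta,\kappa$. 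Finally, asserting that $10/1561$ ``is precisely what emerges from optimising'' is not an argument: the exponent is forced by concrete estimates — Heath--Brown's identity to produce the Type I/II ranges with prescribed cutoffs $u,v,w$, two applications of Cauchy's inequality plus the Bombieri--Iwaniec kernel lemma to decouple the $d_1,d_2,\ell_1,\ell_2$-dependent summation endpoints, and Vaughan's lemma played against $|\alpha-A/Q|<Q^{-2}$ — and the balancing of $H\asymp X^{\theta}\log^2X$, $D$, the auxiliary threshold $D_0$, and $Q$ is the real content of the proof, none of which appears in your proposal. So the skeleton is broadly right, but the proposal as it stands is a plan with genuine gaps (the sequence-of-$X$'s structure and the sieve device) rather than a proof.
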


\noindent
\textbf{Remark.}
According to the work of Shi and Wu \cite{Shi-Wu-2013}, our improvement comes from using the methods developed
by Tolev \cite{Tolev-2000-2} with more delicate iterative techniques and various bounds for exponential sums,
combining with a version of the Lemma 2.2 of Bombieri and Iwaniec \cite{Bombieri-Iwaniec-1986}, while the
previous method in dealing exponential sum, e.g. \cite{Shi-Wu-2013}, is based on the traditional pattern of exponential sum estimates.

\section{Notation.}
Let $X$ be a sufficiently large real number. Set
\begin{equation}\label{parameter-1}
  \delta=0.307708,\quad \rho=0.23077,\quad \eta=0.076928,\quad \kappa=1.4999676,\quad 0<\theta<\frac{10}{1561}.
\end{equation}
Also, we put
\begin{align}\label{parameter-2}
  z=X^\eta,\quad y=X^{\rho},\quad D=X^\delta,\quad
   \Delta=\Delta(X)=X^{-\theta},\quad H=\Delta^{-1}\log^2X.
\end{align}
Throughout this paper, we always denote primes by $p$ and $q$. $\varepsilon$ always denotes an arbitrary small
positive constant, which may not be the same at different occurrences. As usual, we use
$\Omega(n),\varphi(n),\mu(n),\Lambda(n)$ to denote the number of prime factors of $n$ counted
according to multiplicity, Euler's function, M\"{o}bius' function, and Mangold's function, respectively. We
denote by $\tau_k(n)$ the number of solutions of the equation $m_1m_2\dots m_k=n$ in natural variables
$m_1,\dots,m_k$. Especially, we write $\tau_2(n)=\tau(n)$. Let $(m_1,m_2,\dots,m_k)$ and $[m_1,m_2,\dots,m_k]$
be the greatest common divisor and the least common multiple of $m_1,m_2,\dots,m_k$, respectively. Also,
we use $[x]$ and $\|x\|$, respectively, to denote the integer part of $x$ and the distance from $x$ to the nearest
integer. $f(x)\ll g(x)$ means that $f(x)=O(g(x))$; $f(x)\asymp g(x)$ means that $f(x)\ll g(x)\ll f(x)$;
$e(x)=e^{2\pi ix}$; $\mathscr{L}=\log X$. $\mathcal{P}_r$ always denotes an almost--prime with at most $r$ prime factors,
counted according to multiplicity.

\section{Preliminary Lemmas}
\begin{lemma}\label{Bombieri-Iwaniec-lemma}
Let $M\leqslant N<N_1\leqslant M_1$ and $a_n$ be any complex numbers. Then we have
\begin{equation*}
 \Bigg|\sum_{N<n\leqslant N_1}a_n\Bigg|\leqslant\int_{-\infty}^{+\infty}\mathcal{K}(\theta)
 \Bigg|\sum_{M<m\leqslant M_1}a_me(\theta m)\Bigg|\mathrm{d}\theta,
\end{equation*}
where
\begin{equation*}
 \mathcal{K}(\theta)=\min\Bigg(M_1-M+1,\,\frac{1}{\pi|\theta|},\,\frac{1}{\pi^2\theta^2}\Bigg),
\end{equation*}
which satisfies
\begin{equation}\label{Bom-Iwaniec-1}
 \int_{-\infty}^{+\infty}\mathcal{K}(\theta)\mathrm{d}\theta\leqslant3\log(2+M_1-M).
\end{equation}
\end{lemma}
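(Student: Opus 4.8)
The proof of Lemma~\ref{Bombieri-Iwaniec-lemma} is essentially a Fourier-analytic smoothing argument, and I would follow the classical route of Bombieri and Iwaniec. The plan is to exhibit a nonnegative kernel whose Fourier transform dominates the indicator of the interval $(N,N_1]$ on the integers, and then exchange the order of summation and integration. Concretely, first I would introduce the sawtooth-type function or, more cleanly, work with the Fej\'er-type kernel: one wants a function $\Phi$ on $\mathbb{R}$ with $\widehat{\Phi}$ supported (or effectively concentrated) on $[M,M_1]$ and $\Phi(n)\geqslant 1$ for $n\in(N,N_1]$, $\Phi(n)\geqslant 0$ otherwise. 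The standard choice giving the three-term minimum in $\mathcal{K}(\theta)$ is built from the Fej\'er kernel $\big(\tfrac{\sin(\pi L\theta)}{\pi L\theta}\big)^2$ type object, whose transform is a triangle function; the bound $\min(M_1-M+1, (\pi|\theta|)^{-1}, (\pi^2\theta^2)^{-1})$ is exactly what one gets by combining the trivial bound at $\theta=0$, the first-derivative (partial summation) bound, and the second-derivative bound for the geometric-type sum $\sum_{M<m\leqslant M_1} e(\theta m)$.

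The key steps, in order, would be: (i) write $\big|\sum_{N<n\leqslant N_1} a_n\big| = \big|\sum_n a_n \mathbf{1}_{(N,N_1]}(n)\big|$ and replace $\mathbf{1}_{(N,N_1]}(n)$ by $\int_{-\infty}^{+\infty} \widehat{w}(\theta)\, e(\theta n)\,\mathrm{d}\theta$ for a suitable weight, or more directly use the identity that for integers $n$ in a prescribed range, $\mathbf{1}_{(N,N_1]}(n) \leqslant \int \mathcal{K}(\theta) e(\theta(n-c))\,\mathrm{d}\theta$ is NOT quite right as stated — instead one uses that $\sum_{N<n\leqslant N_1} e(-\theta n)$ has absolute value at most $\mathcal{K}(\theta)$, and by Fourier inversion/duality $\sum_{N<n\leqslant N_1} a_n = \int_{-1/2}^{1/2}\big(\sum_{M<m\leqslant M_1} a_m e(\theta m)\big)\big(\sum_{N<n\leqslant N_1} e(-\theta n)\big)\,\mathrm{d}\theta$ when $[N,N_1]\subseteq[M,M_1]$, extended periodically; (ii) bound $\big|\sum_{N<n\leqslant N_1} e(-\theta n)\big|$ by the three quantities — trivially by the number of terms $N_1-N+1\leqslant M_1-M+1$, by $|1-e(-\theta)|^{-1}\leqslant (2\|\theta\|)^{-1}\leqslant (\pi|\theta|)^{-1}$ via summing the geometric series, and by a second application of partial summation or the identity $\sum e(-\theta n) = \sum\big(\text{telescoping}\big)$ to get the $(\pi^2\theta^2)^{-1}$ bound; (iii) pull the resulting kernel $\mathcal{K}(\theta)$ out as stated; (iv) verify \eqref{Bom-Iwaniec-1} by splitting $\int_{-\infty}^{+\infty}\mathcal{K}(\theta)\,\mathrm{d}\theta$ at $|\theta| = 1/(M_1-M+1)$ and at $|\theta|=1$: the central part contributes $\leqslant 2(M_1-M+1)\cdot\tfrac{1}{M_1-M+1}$, the middle part contributes $\leqslant \tfrac{2}{\pi}\log(M_1-M+1)$, and the tail $\int_{|\theta|\geqslant 1}(\pi^2\theta^2)^{-1}\,\mathrm{d}\theta$ is an absolute constant, and these combine to at most $3\log(2+M_1-M)$ after a short numerical check.

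The main obstacle — really the only subtlety — is getting the constant in \eqref{Bom-Iwaniec-1} down to exactly $3$ rather than some larger absolute constant, and ensuring the three-term minimum in $\mathcal{K}(\theta)$ is the sharp one (with the precise constants $\pi$ and $\pi^2$) rather than a lossy version. This requires being careful in step (ii) to use $|1-e(-\theta)| = 2|\sin(\pi\theta)| \geqslant 4|\theta|$ for $|\theta|\leqslant 1/2$ in the right direction, hence $\big|\sum e(-\theta n)\big|\leqslant \tfrac{1}{2|\sin\pi\theta|}\leqslant \tfrac{1}{\pi|\theta|}$ is too weak by a factor — so one instead uses the cleaner bound $\leqslant \tfrac{1}{\pi\|\theta\|}$ together with a second-order version, and then in step (iv) optimizes the splitting points. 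Since this is a known result quoted essentially verbatim from \cite{Bombieri-Iwaniec-1986}, I would present the Fourier-duality identity and the three bounds for the linear exponential sum cleanly, then devote the bulk of the write-up to the elementary but constant-sensitive integral estimate \eqref{Bom-Iwaniec-1}, splitting the range as indicated and bounding each piece explicitly.
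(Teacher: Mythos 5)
The paper itself gives no proof of this lemma (it simply cites Lemma 2.2 of Bombieri--Iwaniec), so the only question is whether your sketch would actually deliver the stated inequality, and as written it would not. The concrete route you commit to in steps (i)--(ii) --- the periodic duality identity $\sum_{N<n\leqslant N_1}a_n=\int_{-1/2}^{1/2}\big(\sum_{M<m\leqslant M_1}a_me(\theta m)\big)\big(\sum_{N<n\leqslant N_1}e(-\theta n)\big)\,\mathrm{d}\theta$ together with the claim that $\big|\sum_{N<n\leqslant N_1}e(-\theta n)\big|\leqslant\mathcal{K}(\theta)$ --- proves a different statement, not Lemma \ref{Bombieri-Iwaniec-lemma}. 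The Dirichlet-type kernel obeys $\min\big(N_1-N,\tfrac{1}{2|\sin\pi\theta|}\big)$, and while $\tfrac{1}{2|\sin\pi\theta|}\leqslant\tfrac{1}{\pi|\theta|}$ on $|\theta|\leqslant\tfrac12$ is fine, the bound by $\tfrac{1}{\pi^2\theta^2}$ is simply false near the endpoints: at $\theta=\tfrac12$ a sum with an odd number of terms has modulus $1$, whereas $\tfrac{1}{\pi^2\theta^2}=4/\pi^2<1$. More fundamentally, the sharp cutoff $\mathbf{1}_{(N,N_1]}$ has no quadratic decay in its Fourier transform, and that quadratic decay is precisely what makes the integral over all of $\mathbb{R}$ converge and yields \eqref{Bom-Iwaniec-1}; no rearrangement of the periodic identity on $[-\tfrac12,\tfrac12]$ produces it.

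The missing idea is the smoothing you gesture at in your first paragraph and then abandon. A majorant $\Phi(n)\geqslant\mathbf{1}$ is indeed useless for arbitrary complex $a_n$, but you do not need one: you need a weight that equals $\mathbf{1}_{(N,N_1]}(n)$ \emph{exactly at every integer} $n$. The trapezoid $T=\chi_{[N+\frac12,\,N_1+\frac12]}*\chi_{[-\frac12,\frac12]}$ does this, since $T(n)=1$ for integers $N<n\leqslant N_1$ and $T(n)=0$ for all other integers ($T$ is linear on the two ramp intervals of length $1$ and hits the values $0,1$ at their integer endpoints). As $T$ is continuous, piecewise linear and compactly supported, Fourier inversion on $\mathbb{R}$ gives $\sum_{N<n\leqslant N_1}a_n=\int_{-\infty}^{+\infty}\widehat{T}(\theta)\sum_{M<m\leqslant M_1}a_me(m\theta)\,\mathrm{d}\theta$, and since $\widehat{T}$ is the product of the transforms of the two intervals, $|\widehat{T}(\theta)|\leqslant\min\big(N_1-N,\tfrac{1}{\pi|\theta|}\big)\cdot\min\big(1,\tfrac{1}{\pi|\theta|}\big)\leqslant\mathcal{K}(\theta)$, which is the lemma with the exact constants $\pi$ and $\pi^2$. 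Your step (iv) then finishes \eqref{Bom-Iwaniec-1} without difficulty: splitting at $|\theta|=\tfrac{1}{\pi(M_1-M+1)}$ and $|\theta|=\tfrac{1}{\pi}$ gives $\int\mathcal{K}\leqslant\tfrac{2}{\pi}\log(M_1-M+1)+\tfrac{4}{\pi}\leqslant3\log(2+M_1-M)$, so the constant $3$ is not delicate at all; the real content of the lemma is the trapezoidal interpolation, not the integral estimate.
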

\begin{proof}
 See Lemma 2.2 of Bombieri and Iwaniec \cite{Bombieri-Iwaniec-1986}.  $\hfill$
\end{proof}

\begin{lemma}\label{Heath-Brown-exponent-sum-fenjie}
Let $3\leqslant u<v<w<X$ and suppose that $w-\frac{1}{2}\in\mathbb{N}$, and that
$w\geqslant 4u^2,\,X\geqslant64 w^2u,\,v^3\geqslant32 X$. Assume further that $f(n)$ is a
complex--valued function. Then the sum
\begin{equation*}
 \sum_{\frac{X}{2}<n\leqslant X}\Lambda(n)f(n)
\end{equation*}
can be decomposed into $O(\log^{10}X)$ sums, each of which either of Type I:
\begin{equation*}
 \sum_{M<m\leqslant M_1}a_m\sum_{L<\ell\leqslant L_1}f(m\ell)
\end{equation*}
with $M<M_1\leqslant 2M,\,L<L_1\leqslant 2L,\,L\geqslant w$,  $a_m\ll m^\varepsilon,\,ML\asymp X$, or of Type II:
\begin{equation*}
 \sum_{M<m\leqslant M_1}a_m\sum_{L<\ell\leqslant L_1}b_\ell f(m\ell)
\end{equation*}
with $M<M_1\leqslant 2M,\,L<L_1\leqslant 2L,\,u\leqslant L\leqslant v$,
$a_m\ll m^\varepsilon,\,b_\ell \ll \ell^\varepsilon,\,ML\asymp X$.
\end{lemma}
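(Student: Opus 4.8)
The plan is to reduce the statement to Heath--Brown's well-known combinatorial identity for $\Lambda$, followed by a dyadic subdivision of the variables and a careful regrouping; this is a standard route for producing Type~I/Type~II decompositions, so the only genuine work lies in checking that the prescribed ranges are consistent with the numerical hypotheses on $u,v,w$ and $X$.

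\emph{Step 1 (Heath--Brown's identity).} I would first fix the integer $K=5$ and put $z_0=X^{1/K}$. For $n\le X$ one has
\[
\Lambda(n)=\sum_{j=1}^{K}(-1)^{j-1}\binom{K}{j}\sum_{\substack{m_1\cdots m_j\,n_1\cdots n_j=n\\ m_1,\dots,m_j\le z_0}}\mu(m_1)\cdots\mu(m_j)\log n_j .
\]
Substituting this into $\sum_{X/2<n\le X}\Lambda(n)f(n)$ and expanding expresses the sum as a combination of $O(1)$ sums, each assembled from $j\le K$ ``M\"obius'' variables $m_i\le z_0=X^{1/5}$ together with $j$ ``smooth'' variables $n_i$, one of which carries a factor $\log n_j\ll X^{\varepsilon}$.

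\emph{Step 2 (dyadic subdivision).} In each of these sums I would split every one of the $2j\le 10$ variables into a dyadic block $(P,P']$ with $P<P'\le 2P$. This yields $O(\log^{10}X)$ sub-sums, a typical one being
\[
\sum_{\substack{M_i<m_i\le M_i'\le 2M_i\\ N_i<n_i\le N_i'\le 2N_i}}\Big(\prod_i\mu(m_i)\Big)(\log n_j)\,f\Big(\prod_i m_i\prod_i n_i\Big),
\]
in which $M_i\le X^{1/5}$ for all $i$ and $\prod_iM_i\prod_iN_i\asymp X$, since $n\asymp X$ and each variable lies within a factor $2$ of its block size. The factor $\log n_j$, being $\ll X^{\varepsilon}$, is absorbed into the coefficient $a_m$ below, or, if it would fall on the inner variable, removed by a partial summation; in either case it does not alter the shape of the conclusion.

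\emph{Step 3 (regrouping).} It remains to split, in each sub-sum, the $2j$ blocks into an outer group with product $m$, of size $M$ and coefficient $a_m$ a product of M\"obius values (hence $a_m\ll m^{\varepsilon}$), and an inner group with product $\ell$, of size $L$; then $ML\asymp X$ holds automatically, and I must arrange to land either in Type~I ($L\ge w$, inner variable carrying no arithmetic weight) or in Type~II ($u\le L\le v$). Sorting the blocks by decreasing size, $P_1\ge P_2\ge\cdots\ge P_r$ with $r=2j\le 10$: since every M\"obius block has size $\le X^{1/5}<(32X)^{1/3}\le v$, any block of size $\ge v$ must be a smooth block, so if $P_1\ge w$ I take $\ell=P_1$ (a smooth variable), place everything else in the outer group, and obtain a Type~I sum. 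In the remaining case every smooth block is smaller than $w$, and I accumulate blocks into the inner group greedily until its product first crosses $u$. The hypotheses $w\ge 4u^2$ and $X\ge 64w^2u$ force $u<X^{1/5}$ and $w<X^{1/2}$, and, together with $v^3\ge 32X$ and the bound $X^{1/5}$ on the M\"obius blocks, they ensure that the running product is never forced into the forbidden gap $(v,w)$: when it would overshoot $v$, the overshoot is controlled tightly enough — by the size of a single block relative to $v/u$, or by $v^3\ge 32X$ when the blocks are numerous and small — that one either stays in $[u,v]$, giving a Type~II sum with $a_m\ll m^{\varepsilon}$ and $b_\ell\ll\ell^{\varepsilon}$, or else a smooth block of size $\ge w$ is already exposed and we revert to the Type~I case. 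The normalisation $w-\tfrac{1}{2}\in\mathbb{N}$ serves only to place the inner endpoints $L<L_1$ as stated, and the numerical factors $4$, $64$, $32$ merely absorb the factor-$2$ slack of the dyadic blocks.

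\emph{Main obstacle.} The crux is Step~3: a case-by-case verification, over all admissible size-vectors $(M_1,\dots,M_j;N_1,\dots,N_j)$ and using the full strength of the inequalities relating $u,v,w$ and $X$, that the greedy regrouping always terminates in a bona fide Type~I or Type~II sum and is never trapped in the gap $(v,w)$. Steps~1 and~2, and the coefficient bookkeeping, are routine.
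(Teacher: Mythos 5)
The paper offers no proof of this lemma at all: it simply cites Lemma 3 of Heath--Brown's paper on the Pjatecki\u{\i}--\u{S}apiro prime number theorem. So your sketch has to stand on its own, and it does not yet: the step you yourself label the crux (Step 3) is precisely the content of the lemma, and the heuristic you give for it is insufficient and, as stated, incorrect. The greedy accumulation ``until the product first crosses $u$'' lands in $[u,v]$ only if every block available at that moment has size at most about $v/u$, but the hypotheses allow much larger blocks: smooth blocks of size up to $w$ and M\"{o}bius blocks of size up to $X^{1/5}$. Worse, the hypotheses do not even force $u^2\leqslant v$: from $w\geqslant 4u^2$ and $X\geqslant 64w^2u$ one only gets $u\leqslant (X/1024)^{1/5}$, while $v$ may be as small as $(32X)^{1/3}$; for instance $u=X^{1/5}/4$, $w\approx X^{2/5}/4$, $v=(32X)^{1/3}$ satisfies all the hypotheses, and there $u^2>v$, so two blocks each just below $u$ already jump over the entire window $[u,v]$. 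Your fallback clauses (``overshoot controlled by the size of a single block relative to $v/u$, or by $v^3\geqslant 32X$'', ``a smooth block of size $\geqslant w$ is already exposed'') do not cover the genuinely troublesome configurations, namely blocks lying in the gap $(v,w)$ combined with blocks just below $u$, where no block of size $\geqslant w$ exists and no single block lies in $[u,v]$.

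A correct treatment of Step 3 must use the numerical hypotheses structurally rather than as ``factor-$2$ slack''. For example, $v^3\geqslant 32X$ shows at most two blocks can exceed $v$; if exactly two do (necessarily lying in $(v,w)$ when no block reaches $w$), then $X\geqslant 64w^2u$ and $v^3\geqslant 32X$ force the product of \emph{all remaining} blocks into roughly $[8u,\,v/32]$, so that complementary product serves as $\ell$; the cases of one or zero blocks above $v$ require a further argument, using that the total product is $\asymp X$ and that the M\"{o}bius blocks are $\leqslant X^{1/5}$, to exclude configurations in which every subproduct jumps over $[u,v]$. None of this case analysis appears in your proposal, and it is exactly where $w\geqslant 4u^2$, $X\geqslant 64w^2u$ and $v^3\geqslant 32X$ are consumed. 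Steps 1 and 2 (Heath--Brown's identity with $K=5$, dyadic splitting, absorbing the logarithm and the coefficient bookkeeping, and the count $O(\log^{10}X)$) are routine and fine; until Step 3 is actually carried out, what you have is an outline of a plausible alternative route to Heath--Brown's Lemma 3, not a proof of it.
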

\begin{proof}
 See Lemma 3 of Heath--Brown \cite{Heath-Brown-1983}.  $\hfill$
\end{proof}

\begin{lemma}\label{expo-Tri-lemma}
  For $P\geqslant1$, we have
  \begin{equation*}
     \sum_{1\leqslant n\leqslant P}e(\alpha n)\leqslant \min\bigg(P,\frac{1}{2\|\alpha\|}\bigg).
  \end{equation*}
\end{lemma}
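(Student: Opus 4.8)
The plan is to establish the two bounds appearing in the minimum separately, reading the left-hand side throughout as its absolute value. The bound by $P$ is immediate: since $|e(\alpha n)|=1$ for every integer $n$ and the number of integers in $[1,P]$ is $\lfloor P\rfloor\leqslant P$, the triangle inequality gives $\big|\sum_{1\leqslant n\leqslant P}e(\alpha n)\big|\leqslant P$.

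For the bound by $1/(2\|\alpha\|)$ one may assume $\alpha\notin\mathbb{Z}$, for otherwise $\|\alpha\|=0$ and $1/(2\|\alpha\|)$ is to be read as $+\infty$, so the bound is vacuous. Put $N=\lfloor P\rfloor$, which is a positive integer because $P\geqslant1$. Since $e(\alpha)\neq1$, I would sum the geometric progression to get
\[
\sum_{1\leqslant n\leqslant P}e(\alpha n)=\sum_{n=1}^{N}e(\alpha n)=e(\alpha)\cdot\frac{e(\alpha N)-1}{e(\alpha)-1},
\]
and then, using $|e(\alpha N)-1|\leqslant 2$ together with the identity $|e(\alpha)-1|=|e^{2\pi i\alpha}-1|=2|\sin(\pi\alpha)|$, deduce
\[
\Big|\sum_{1\leqslant n\leqslant P}e(\alpha n)\Big|\leqslant\frac{2}{|e(\alpha)-1|}=\frac{1}{|\sin(\pi\alpha)|}.
\]

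It then remains to verify the elementary inequality $|\sin(\pi\alpha)|\geqslant 2\|\alpha\|$; this is the only point with any content, although it too is routine, and so constitutes the "main obstacle" only in a nominal sense. Both sides are invariant under $\alpha\mapsto\alpha+1$ and under $\alpha\mapsto-\alpha$, so it suffices to check $\sin(\pi t)\geqslant 2t$ for $0\leqslant t\leqslant 1/2$, where $\|t\|=t$. On this interval $\sin(\pi t)$ is concave and agrees with the linear function $t\mapsto 2t$ at the two endpoints $t=0$ and $t=1/2$, hence lies above it. Substituting $|\sin(\pi\alpha)|\geqslant 2\|\alpha\|$ into the previous display gives $\big|\sum_{1\leqslant n\leqslant P}e(\alpha n)\big|\leqslant 1/(2\|\alpha\|)$, and combining this with the bound by $P$ yields the assertion.
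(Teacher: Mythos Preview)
Your proof is correct and is exactly the standard textbook argument; the paper itself does not give a proof but simply refers to Lemma~4 of Chapter~VI of Karatsuba, where the same geometric-series computation together with the inequality $|\sin(\pi\alpha)|\geqslant 2\|\alpha\|$ is carried out.
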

\begin{proof}
See Lemma 4 of Chapter VI of Karatsuba \cite{Karatsuba-book}.  $\hfill$
\end{proof}

\begin{lemma}\label{Vaughan-Lemma-2.2}
  Suppose that $Y_1,Y_2,\alpha$ are real numbers with $Y_1\geqslant1, Y_2\geqslant1$, and that $|\alpha-a/q|\leqslant q^{-2}$ with $(a,q)=1$. Then we have
\begin{equation*}
  \sum_{n\leqslant Y_1}\min\bigg(\frac{Y_1Y_2}{n},\frac{1}{\|\alpha n\|}\bigg)
  \ll Y_1Y_2\bigg(\frac{1}{q}+\frac{1}{Y_2}+\frac{q}{Y_1Y_2}\bigg)\log(2Y_1q).
\end{equation*}
\end{lemma}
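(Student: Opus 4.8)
The plan is to run the classical dissection of the range of $n$ into blocks of length $q$. Write $X=Y_1Y_2$. First dispose of $q=1$: then $\min(X/n,1/\|\alpha n\|)\le X/n$, so the sum is $\le X\sum_{n\le Y_1}1/n\ll X\log(2Y_1)$, which is already dominated by the asserted bound (whose leading term is $X/q=X$ times $\log(2Y_1q)$). So assume $q\ge 2$. Partition the integers $1\le n\le Y_1$ into consecutive blocks $I_j=\{n:(j-1)q<n\le jq\}\cap[1,Y_1]$, $j=1,\dots,J$, with $J\le Y_1/q+1$. On each block one may write $\alpha n=\frac{an}{q}+c_j+\epsilon_n$, where $c_j=(\alpha-\frac aq)(j-1)q$ is constant along $I_j$ and $|\epsilon_n|=|\alpha-\frac aq|\,|n-(j-1)q|\le q\cdot q^{-2}=q^{-1}$; since $(a,q)=1$ and $|I_j|\le q$, the numbers $\frac{an}{q}+c_j\bmod 1$ are distinct and $q^{-1}$-separated on the unit circle.

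From the separation, for every $t>0$,
\begin{equation*}
\#\{n\in I_j:\|\alpha n\|<t\}\;\le\;\#\{n\in I_j:\|\tfrac{an}{q}+c_j\|<t+\tfrac1q\}\;\ll\;tq+1 .
\end{equation*}
Hence, ordering the elements of $I_j$ as $n_1,n_2,\dots$ so that $\|\alpha n_m\|$ is non-decreasing, one gets $\|\alpha n_m\|\gg m/q$ once $m$ exceeds some absolute constant $C$, and therefore $\sum_{m>C}1/\|\alpha n_m\|\ll q\sum_{m\le q}1/m\ll q\log(2q)$. For a block with $j\ge 2$ every $n\in I_j$ has $n>(j-1)q$, so $X/n<X/((j-1)q)$ uniformly on $I_j$; estimating the first $C$ ordered terms by this quantity and the remaining terms by $1/\|\alpha n_m\|$ yields
\begin{equation*}
\sum_{n\in I_j}\min\!\Big(\frac Xn,\frac1{\|\alpha n\|}\Big)\;\ll\;\frac{X}{(j-1)q}+q\log(2q)\qquad(2\le j\le J).
\end{equation*}

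The one delicate point — and the place I expect to be the main obstacle — is the first block $I_1$ (where $c_1=0$), on which $X/n$ is not uniformly bounded. I would split $I_1$ according to whether $\|\alpha n\|\ge\frac1{2q}$. By the counting bound there are only $O(1)$ indices $n\in I_1$ with $\|\alpha n\|<\frac1{2q}$; moreover any such $n$ must satisfy $n>q/2$, since for $1\le n\le q/2$ one has $q\nmid an$, hence $\|\frac{an}{q}\|\ge\frac1q$ and $\|\alpha n\|\ge\frac1q-\frac{n}{q^2}\ge\frac1{2q}$. Thus each exceptional term is at most $X/n<2X/q$, contributing $\ll X/q$ altogether, while the non-exceptional terms contribute $\ll q\log(2q)$ by the same ordering argument (their $\|\alpha n\|$ are $\gg 1/q$ and $\gg m/q$ past an absolute index); so $\sum_{n\in I_1}\min(X/n,1/\|\alpha n\|)\ll X/q+q\log(2q)$.

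Summing over all blocks and using $J\le Y_1/q+1$,
\begin{equation*}
\sum_{n\le Y_1}\min\!\Big(\frac Xn,\frac1{\|\alpha n\|}\Big)\;\ll\;\frac Xq\sum_{1\le j\le J}\frac1j+Jq\log(2q)\;\ll\;\Big(\frac Xq+Y_1+q\Big)\log(2Y_1q),
\end{equation*}
and since $X=Y_1Y_2$ the right-hand side equals $Y_1Y_2\big(\frac1q+\frac1{Y_2}+\frac{q}{Y_1Y_2}\big)\log(2Y_1q)$, as required. This is essentially Vaughan's argument; alternatively one may simply cite Lemma 2.2 of his monograph.
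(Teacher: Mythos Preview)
Your proof is correct and reproduces the standard block-dissection argument; the paper itself does not give a proof but simply cites Lemma~2.2 of Vaughan's monograph, exactly the alternative you mention in your last sentence. So your approach coincides with the paper's (by reference), only you have spelled out the details.
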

\begin{proof}
 See Lemma 2.2 of Vaughan \cite{Vaughan-book}.  $\hfill$
\end{proof}

\section{Proof of Theorem \ref{Theorem}}

As is shown in \cite{Todorova-Tolev-2010}, we take a periodic function $\chi(t)$ with period $1$ such that
\begin{equation}\label{chi-function}
  \begin{cases}
    0<\chi(t)<1, & \textrm{if}\,\, -\Delta<t<\Delta,  \\
    \quad \chi(t)=0, &  \textrm{if}\,\, \Delta\leqslant t\leqslant1-\Delta,
  \end{cases}
\end{equation}
which has a Fourier series
\begin{equation}\label{chi-Fourier-expan}
\chi(t)=\Delta+\sum_{|k|>0}g(k)e(kt)
\end{equation}
with coefficients satisfying
\begin{align}\label{Fourier-tail}
   & g(0)=\Delta, \nonumber \\
   & g(k)\ll \Delta, \quad \textrm{for all $k$}, \nonumber \\
   &  \sum_{|k|>H}|g(k)|\ll X^{-1}.
\end{align}
The existence of such a function is a consequence of a well known lemma of Vinogradov. For instance, one can see
Chapter I, \S2 in \cite{Karatsuba-book}. Consider the sum
\begin{equation}\label{Gamma-sum-def}
  \Gamma:=\Gamma(X)=\sum_{\substack{\frac{X}{2}<p\leqslant X\\ (p+2,P(z))=1}}\chi(\alpha p^2+\beta)
                    \mathcal{W}_p\log p,
\end{equation}
where
\begin{equation}\label{P(z)-def}
  P(z)=\prod_{2<p\leqslant z}p,
\end{equation}
and
\begin{equation}\label{W_p-wight}
  \mathcal{W}_p=1-\kappa\sum_{\substack{z<q\leqslant y\\ q|p+2}}\bigg(1-\frac{\log q}{\log y}\bigg).
\end{equation}
Let $\Gamma_1$ denote the sum of the terms of $\Gamma(X)$ in which $\mathcal{W}_p>0$. Then we have
\begin{equation*}
  \Gamma(X)\leqslant\Gamma_1.
\end{equation*}
If we denote by $\Gamma_2$ the sum of the terms of $\Gamma_1$ in which $\mu(p+2)=0$. It is easy to see that
\begin{align*}
             0 \leqslant \Gamma_2
  \ll &\,\, \sum_{q\geqslant z}\sum_{\substack{n\leqslant X\\ n+2\equiv0 \!\!\!\!\!\pmod {q^2} }}\log n
             \ll (\log X)\sum_{z\leqslant q\leqslant\sqrt{X+2}}\bigg(\frac{X}{q^2}+1\bigg)
                   \nonumber \\
  \ll &\,\, X^{1+\varepsilon}z^{-1}+X^{\frac{1}{2}+\varepsilon}\ll X^{1-\eta+\varepsilon}.
\end{align*}
By noting the fact that the contribution of the terms (if such terms exist) in $\Gamma_1$ for which
$X-2<p\leqslant X$ is $O(\log X)$, we deduce that
\begin{equation}\label{gamma<gamma_3+error}
  \Gamma\leqslant\Gamma_3+O(X^{1-\eta+\varepsilon}),
\end{equation}
where
\begin{equation*}
 \Gamma_3=\sum_{\substack{\frac{X}{2}<p\leqslant X-2\\ \mathcal{W}_p>0,\, \mu^2(p+2)=1\\ (p+2,P(z))=1}}
          \chi(\alpha p^2+\beta)\mathcal{W}_p\log p.
\end{equation*}
On one hand, if we assume that
\begin{equation*}
 \Gamma(X)\gg\frac{\Delta X}{\log X},
\end{equation*}
then from (\ref{gamma<gamma_3+error}), we get
\begin{equation*}
 \Gamma_3\gg\frac{\Delta X}{\log X},
\end{equation*}
and thus $\Gamma_3>0$. Hence there exists a prime $p$, which satisfies
\begin{equation}\label{condition-1}
  \frac{X}{2}<p\leqslant X-2, \quad \mathcal{W}_p>0,\quad \mu^2(p+2)=1,\quad (p+2,P(z))=1,
\end{equation}
and such that
\begin{equation}\label{exist-1}
  \chi(\alpha p^2+\beta)>0.
\end{equation}
Combining (\ref{chi-function}), (\ref{condition-1}) and (\ref{exist-1}), we can see that this prime $p$ satisfies
\begin{equation*}
  \big\|\alpha p^2+\beta \big\|<p^{-\theta}.
\end{equation*}
On the other hand, by the properties of the weights $\mathcal{W}_p$ (for example, one can see Chapter 9 of \cite{Halberstam-Richert}), it is easy to see that if $p$ satisfies (\ref{condition-1}), then
\begin{equation*}
  \Omega(p+2)=\sum_{\substack{q>z\\ q|p+2}}1<\frac{1}{\kappa}+\sum_{\substack{q>z\\ q|p+2}}\frac{\log q}{\log y}
             =\frac{1}{\kappa}+\frac{\log(p+2)}{\log y}\leqslant\frac{1}{\kappa}+\frac{1}{\rho}<5,
\end{equation*}
which implies $p+2=\mathcal{P}_4$. Therefore, in order to prove Theorem \ref{Theorem}, it is sufficient to show that there exists a sequence $\{X_j\}_{j=1}^\infty$, which satisfies
\begin{equation*}
  \lim_{j\to\infty}X_j=+\infty,\qquad \Gamma(X_j)\gg \frac{\Delta(X_j)X_j}{\log X_j},\quad j=1,2,3\dots.
\end{equation*}
By (\ref{Gamma-sum-def}) and (\ref{W_p-wight}), we can write $\Gamma$ as follows
\begin{equation}\label{Gamma=Psi-Phi}
   \Gamma=\Psi-\kappa\Phi,
\end{equation}
where
\begin{equation}\label{Psi-def}
  \Psi=\sum_{\substack{\frac{X}{2}<p\leqslant X\\(p+2,P(z))=1}}\chi(\alpha p^2+\beta)\log p,
\end{equation}
and
\begin{equation}\label{Phi-def}
  \Phi=\sum_{\substack{\frac{X}{2}<p\leqslant X\\(p+2,P(z))=1}}\chi(\alpha p^2+\beta)(\log p)
        \sum_{\substack{z<q\leqslant y\\ q|p+2}}\bigg(1-\frac{\log q}{\log y}\bigg).
\end{equation}
Next, we shall give lower bound estimate of $\Psi$ and upper bound estimate of $\Phi$ by using lower bound linear
sieve and upper bound linear sieve, respectively. First, we consider $\Psi$. Let $\lambda^-(d)$ be the lower bounds for Rosser's
weights of level $D$. Hence for any positive integer $d$, there holds
\begin{equation}\label{lower-condition}
  |\lambda^-(d)|\leqslant 1, \quad \lambda^-(d)=0 \quad \textrm{if}\quad d>D\,\,\,\,\textrm{or}\,\,\,\, \mu(d)=0,
\end{equation}
\begin{equation}\label{lambda_-upper}
  \sum_{d|n}\lambda^-(d)\leqslant\sum_{d|n}\mu(d)=
  \begin{cases}
    1, & \quad \textrm{if} \quad n=1,\\
    0, & \quad \textrm{if} \quad n\in\mathbb{N},\,n>1.\\
  \end{cases}
\end{equation}
Also, we shall use the fact if $2<s<4$, then there holds
\begin{equation}\label{lower-bound-sieve}
  \sum_{d|P(z)}\frac{\lambda^-(d)}{\varphi(d)}\geqslant\Pi(z)\bigg(\frac{2e^\gamma\log(s-1)}{s}
     +O\Big(\big(\log X\big)^{-1/3}\Big)\bigg),
\end{equation}
where
\begin{equation*}
  \Pi(z)=\prod_{2<p\leqslant z}\bigg(1-\frac{1}{p-1}\bigg).
\end{equation*}
Now, we take
\begin{equation}\label{s-def}
   s=\frac{\log D}{\log z}=\frac{\delta}{\eta}=\frac{76927}{19232}\in(2,4)
\end{equation}
in (\ref{lower-bound-sieve}). By (\ref{Psi-def}) and (\ref{lambda_-upper}), we obtain
\begin{align}\label{Psi>=Psi_1}
  \Psi = &\,\, \sum_{\frac{X}{2}<p\leqslant X}\chi(\alpha p^2+\beta)(\log p)\sum_{d|(p+2,P(z))}\mu(d)
                         \nonumber  \\
     \geqslant &\,\, \sum_{\frac{X}{2}<p\leqslant X}\chi(\alpha p^2+\beta)(\log p)\sum_{d|(p+2,P(z))}\lambda^-(d)
                         \nonumber  \\
     = &\,\, \sum_{d|P(z)}\lambda^-(d)\sum_{\substack{\frac{X}{2}<p\leqslant X\\ p+2\equiv0(\!\bmod d)}}
                             \chi(\alpha p^2+\beta)\log p
                          \nonumber  \\
     = &\,\, \Psi_1, \quad \textrm{say}.
\end{align}
From (\ref{chi-Fourier-expan}), we have
\begin{align*}
  \Psi_1 = &\,\, \sum_{d|P(z)}\lambda^-(d)\sum_{\substack{\frac{X}{2}<p\leqslant X\\ p+2\equiv0(\!\bmod d)}}
                 \Bigg(\Delta+\sum_{|k|>0}g(k)e(\alpha p^2k+\beta k)\Bigg)\log p
                           \nonumber  \\
  = &\,\, \Delta\sum_{d|P(z)}\lambda^-(d)\sum_{\substack{\frac{X}{2}<p\leqslant X\\ p+2\equiv0(\!\bmod d)}}\log p
                           \nonumber  \\
  &\,\,    +\sum_{d|P(z)}\lambda^-(d)\sum_{|k|>0}g(k)e(\beta k)
            \sum_{\substack{\frac{X}{2}<p\leqslant X\\ p+2\equiv0(\!\bmod d)}}e(\alpha p^2k)\log p
                            \nonumber  \\
  = &\,\, \Delta\sum_{d|P(z)}\lambda^-(d)\sum_{\substack{\frac{X}{2}<p\leqslant X\\ p+2\equiv0(\!\bmod d)}}\log p
                            \nonumber  \\
    &\,\,   +\Delta\sum_{d|P(z)}\lambda^-(d)\sum_{0<|k|\leqslant H}\Big(\Delta^{-1}g(k)e(\beta k)\Big)
            \sum_{\substack{\frac{X}{2}<p\leqslant X\\ p+2\equiv0(\!\bmod d)}}e(\alpha p^2k)\log p
                             \nonumber  \\
     &\,\,   +\sum_{d|P(z)}\lambda^-(d)\sum_{|k|>H}g(k)e(\beta k)
             \sum_{\substack{\frac{X}{2}<p\leqslant X\\ p+2\equiv0(\!\bmod d)}}e(\alpha p^2k)\log p.
\end{align*}
By (\ref{Fourier-tail}), and the fact that $\lambda^-(d)=0$ for $d>D$, we obtain
\begin{align*}
   &\,\,  \sum_{d|P(z)}\lambda^-(d)\sum_{|k|>H}g(k)e(\beta k)
          \sum_{\substack{\frac{X}{2}<p\leqslant X\\ p+2\equiv0(\!\bmod d)}}e(\alpha p^2k)\log p
                          \nonumber  \\
   \ll &\,\, \sum_{d|P(z)}\big|\lambda^-(d)\big| \sum_{|k|>H}\big|g(k)\big|
             \sum_{\substack{\frac{X}{2}<p\leqslant X\\ p+2\equiv0(\!\bmod d)}}\log p
    \ll \sum_{d\leqslant D}\frac{1}{\varphi(d)}\ll \log D\ll \log X.
\end{align*}
Therefore, we get
\begin{equation}\label{Psi_1-fenjie}
  \Psi_1=\Delta(\Psi_2+\Psi_3)+O(\log X),
\end{equation}
where
\begin{align} \label{Psi_3-def}
  &  \Psi_2=\sum_{d|P(z)}\lambda^-(d) \sum_{\substack{\frac{X}{2}<p\leqslant X\\ p+2\equiv0(\!\bmod d)}}\log p,
               \nonumber  \\
  & \Psi_3=\sum_{d|P(z)}\lambda^-(d)\sum_{0<|k|\leqslant H}c(k)
           \sum_{\substack{\frac{X}{2}<p\leqslant X\\ p+2\equiv0(\!\bmod d)}}e(\alpha p^2k)\log p ,
                 \\
  &  c(k)=\Delta^{-1}g(k)e(\beta k)\ll 1.  \nonumber
\end{align}
For $\Psi_2$, by Bombieri--Vinogradov's mean value theorem (See Chapter 28 of \cite{Davenport-book}) and (\ref{lower-condition}), we derive that
\begin{equation}\label{Psi_2-asymp}
  \Psi_2=\frac{X}{2}\sum_{d|P(z)}\frac{\lambda^-(d)}{\varphi(d)}+O\bigg(\frac{X}{\log^2X}\bigg).
\end{equation}
It follows from Mertens' prime number theorem (See \cite{Mertens}) that
\begin{equation}\label{Pi(z)-asymp}
  \Pi(z)\asymp \frac{1}{\log z}.
\end{equation}
Then from (\ref{lower-bound-sieve}), (\ref{Psi_2-asymp}) and (\ref{Pi(z)-asymp}), we obtain
\begin{equation}\label{Psi_2-lower}
  \Psi_2\geqslant e^\gamma X\Pi(z)\frac{\log(s-1)}{s}+O\bigg(\frac{X}{\log^{4/3}X}\bigg),
\end{equation}
where $s$ is defined by (\ref{s-def}). For $\Psi_3$, we shall investigate it in the next section.

Now, we study the sum $\Phi$, which is defined by (\ref{Phi-def}). We rewrite $\Phi$ in the following form
\begin{equation}\label{Phi-rewrite}
  \Phi=\sum_{z<q<y}\bigg(1-\frac{\log q}{\log y}\bigg)
       \sum_{\substack{\frac{X}{2}<p\leqslant X\\ p+2\equiv0(\!\bmod q)\\ (p+2,P(z))=1}}
        \chi(\alpha p^2+\beta)\log p.
\end{equation}
In order to give upper bound estimate of $\Phi$, we shall apply an upper bound linear sieve. Let $\lambda_q^+(d)$
be the upper bounds for Rosser's weights of level $D/q$. Hence for any positive integer $d$, we have
\begin{equation}\label{upper-condition}
  |\lambda_q^+(d)|\leqslant 1, \quad \lambda_q^+(d)=0 \quad \textrm{if}\quad d>\frac{D}{q}\,\,\,\,\textrm{or}\,\,\,\, \mu(d)=0,
\end{equation}
\begin{equation}\label{lambda^-upper}
  \sum_{d|n}\lambda_q^+(d)\geqslant\sum_{d|n}\mu(d)=
  \begin{cases}
    1, & \quad \textrm{if} \quad n=1,\\
    0, & \quad \textrm{if} \quad n\in\mathbb{N},\,n>1.\\
  \end{cases}
\end{equation}
Also, we shall use the fact, for $1<s_1<3$, there holds
\begin{equation}\label{upper-bound-sieve}
  \sum_{d|P(z)}\frac{\lambda_q^+(d)}{\varphi(d)}\leqslant\Pi(z)\bigg(\frac{2e^\gamma}{s_1}
     +O\Big(\big(\log X\big)^{-1/3}\Big)\bigg).
\end{equation}
For prime $q$ in the sum $\Phi$, we take
\begin{equation*}
  s_1=\frac{\log(D/q)}{\log z}.
\end{equation*}
Then it is easy to check that $1<s_1<3$, and thus (\ref{upper-bound-sieve}) holds. By (\ref{Phi-rewrite})--(\ref{lambda^-upper}), we obtain
\begin{align}\label{Phi<=Phi_1}
  \Phi = & \,\, \sum_{z<q<y}\bigg(1-\frac{\log q}{\log y}\bigg)
                \sum_{\substack{\frac{X}{2}<p\leqslant X\\ p+2\equiv0(\!\bmod q)}}\chi(\alpha p^2+\beta)(\log p)
                \sum_{d|(p+2,P(z))}\mu(d)
                         \nonumber  \\
 \leqslant & \,\, \sum_{z<q<y}\bigg(1-\frac{\log q}{\log y}\bigg)
                \sum_{\substack{\frac{X}{2}<p\leqslant X\\ p+2\equiv0(\!\bmod q)}}\chi(\alpha p^2+\beta)(\log p)
                \sum_{d|(p+2,P(z))}\lambda_q^+(d)
                           \nonumber  \\
   = & \,\, \sum_{z<q<y}\bigg(1-\frac{\log q}{\log y}\bigg)\sum_{d|P(z)}\lambda_q^+(d)
            \sum_{\substack{\frac{X}{2}<p\leqslant X\\ p+2\equiv0(\!\bmod {qd})}}\chi(\alpha p^2+\beta)(\log p)
                           \nonumber  \\
    = & \,\, \sum_{m\leqslant D}\nu(m)
             \sum_{\substack{\frac{X}{2}<p\leqslant X\\ p+2\equiv0(\!\bmod m)}}\chi(\alpha p^2+\beta)(\log p)
              \,\,=: \Phi_1,
\end{align}
where
\begin{equation}\label{nu(m)-def}
 \nu(m)=\sum_{\substack{z<q<y\\ d|P(z)\\m=dq}}\bigg(1-\frac{\log q}{\log y}\bigg)\lambda^+_q(d).
\end{equation}
If $m\leqslant z$, then $\nu(m)=0$. If $z<m\leqslant D$, by (\ref{P(z)-def}) and (\ref{nu(m)-def}) we know that the representation $m=dq$ with
$z<q<y$ and $d|P(z)$ is unique. Thus, it is easy to see that
\begin{equation}\label{nu(m)-upper}
 |\nu(m)|\leqslant1.
\end{equation}
From (\ref{chi-Fourier-expan}), we get
\begin{align*}
  \Phi_1 = & \,\, \sum_{m\leqslant D}\nu(m)
                  \sum_{\substack{\frac{X}{2}<p \leqslant X\\ p+2\equiv0(\!\bmod m)}}
                  \bigg(\Delta+\sum_{|k|>0}g(k)e(\alpha p^2k+\beta k)\bigg)\log p
                          \nonumber  \\
  = & \,\, \Delta\sum_{m\leqslant D}\nu(m)\sum_{\substack{\frac{X}{2}<p \leqslant X\\ p+2\equiv0(\!\bmod m)}}\log p
                           \nonumber  \\
     & \,\, + \sum_{m\leqslant D}\nu(m)\sum_{|k|>0}g(k)e(\beta k)
           \sum_{\substack{\frac{X}{2}<p \leqslant X\\ p+2\equiv0(\!\bmod m)}} e(\alpha p^2k)\log p
                           \nonumber  \\
  = & \,\, \Delta\sum_{m\leqslant D}\nu(m)\sum_{\substack{\frac{X}{2}<p \leqslant X\\ p+2\equiv0(\!\bmod m)}}\log p
                           \nonumber  \\
  & \,\, +\Delta \sum_{m\leqslant D}\nu(m)\sum_{0<|k|\leqslant H}\Big(\Delta^{-1}g(k)e(\beta k)\Big)
           \sum_{\substack{\frac{X}{2}<p \leqslant X\\ p+2\equiv0(\!\bmod m)}} e(\alpha p^2k)\log p
                            \nonumber  \\
  & \,\, +\sum_{m\leqslant D}\nu(m)\sum_{|k|>H}g(k)e(\beta k)
          \sum_{\substack{\frac{X}{2}<p \leqslant X\\ p+2\equiv0(\!\bmod m)}} e(\alpha p^2k)\log p.
\end{align*}
By (\ref{Fourier-tail}) and (\ref{nu(m)-upper}), we get
 \begin{align*}
   &\,\,  \sum_{m\leqslant D}\nu(m)\sum_{|k|>H}g(k)e(\beta k)
          \sum_{\substack{\frac{X}{2}<p\leqslant X\\ p+2\equiv0(\!\bmod m)}}e(\alpha p^2k)\log p
                          \nonumber  \\
   \ll &\,\, \sum_{|k|>H}\big|g(k)\big| \sum_{m\leqslant D}
             \sum_{\substack{\frac{X}{2}<p\leqslant X\\ p+2\equiv0(\!\bmod m)}}\log p
    \ll \sum_{m\leqslant D}\frac{1}{\varphi(m)}\ll \log X.
\end{align*}
Thus, we derive that
\begin{equation}\label{Phi_1-fenjie}
  \Phi_1=\Delta\big(\Phi_2+\Phi_3\big)+O(\log X),
\end{equation}
where
\begin{align}\label{Phi_3-def}
  &  \Phi_2 =\sum_{m\leqslant D}\nu(m)\sum_{\substack{\frac{X}{2}<p \leqslant X\\ p+2\equiv0(\!\bmod m)}}\log p,
                \nonumber  \\
  & \Phi_3=\sum_{m\leqslant D}\nu(m)\sum_{0<|k|\leqslant H}c(k)
      \sum_{\substack{\frac{X}{2}<p\leqslant X\\ p+2\equiv0(\!\bmod m)}}e(\alpha p^2k)\log p,
                   \\
  & c(k)=\Delta^{-1}g(k)e(\beta k)\ll1.    \nonumber
\end{align}
By Bombieri--Vinogradov's mean value theorem and (\ref{nu(m)-upper}), we have
\begin{equation}\label{Phi_2-asymp}
  \Phi_2=\frac{X}{2}\sum_{m\leqslant D}\frac{\nu(m)}{\varphi(m)}+O\bigg(\frac{X}{\log^2 X}\bigg).
\end{equation}
Using (\ref{upper-bound-sieve}) and (\ref{nu(m)-def}), we obtain
\begin{align}\label{Phi_2-innner}
            \sum_{m\leqslant D}\frac{\nu(m)}{\varphi(m)}
   = & \,\, \sum_{z<q<y}\bigg(1-\frac{\log q}{\log y}\bigg)\sum_{d|P(z)}\frac{\lambda_q^+(d)}{\varphi(qd)}
                 \nonumber  \\
   = & \,\, \sum_{z<q<y}\bigg(1-\frac{\log q}{\log y}\bigg)\frac{1}{q-1}
            \sum_{d|P(z)}\frac{\lambda_q^+(d)}{\varphi(d)}
                 \nonumber  \\
 \leqslant & \,\,\sum_{z<q<y}\bigg(1-\frac{\log q}{\log y}\bigg)\frac{1}{q-1}
      \Pi(z)\Bigg(2e^\gamma\bigg(\frac{\log(D/q)}{\log z}\bigg)^{-1}
                   +O\Big(\big(\log X\big)^{-1/3}\Big)\Bigg) .
\end{align}
Therefore, by (\ref{Pi(z)-asymp}), (\ref{Phi_2-asymp}) and (\ref{Phi_2-innner}), we have
\begin{equation}\label{Phi_2-upper-z}
  \Phi_2\leqslant e^\gamma X\Pi(z)\sum_{z<q<y}\bigg(1-\frac{\log q}{\log y}\bigg)\frac{1}{q-1}
                     \bigg(\frac{\log(D/q)}{\log z}\bigg)^{-1}
                      +O\Bigg(\frac{X}{(\log X)^{\frac{4}{3}-\varepsilon}}\Bigg).
\end{equation}

Now, we find a lower bound for the sum $\Gamma$. From (\ref{Gamma=Psi-Phi}), (\ref{Psi>=Psi_1}), (\ref{Psi_1-fenjie}), (\ref{Psi_2-lower}), (\ref{Phi<=Phi_1}), (\ref{Phi_1-fenjie}) and (\ref{Phi_2-upper-z}), we derive that
\begin{equation}\label{Gamma-lower-1}
  \Gamma\geqslant e^\gamma\Delta X\Pi(z)\mathfrak{S}
                 +O\Bigg(\frac{\Delta X}{(\log X)^{\frac{4}{3}-\varepsilon}}\Bigg)
                 +O\Big(\Delta\big|\Psi_3-\kappa\Phi_3\big|\Big),
\end{equation}
where
\begin{equation*}
  \mathfrak{S}=\frac{\log(s-1)}{s}-\kappa\sum_{z<q<y}\bigg(1-\frac{\log q}{\log y}\bigg)\frac{1}{q-1}
               \bigg(\frac{\log(D/q)}{\log z}\bigg)^{-1},\quad s=\frac{\log D}{\log z}.
\end{equation*}
Moreover, by partial summation and the prime number theorem, it is easy to show that
\begin{equation}\label{S-tihuan}
  \mathfrak{S}=\mathfrak{S}_0+O\bigg(\frac{1}{\log X}\bigg),
\end{equation}
where
\begin{equation*}
  \mathfrak{S}_0=\frac{\log(s-1)}{s}-\kappa\eta\int_{\eta}^{\rho}\bigg(\frac{1}{u}-\frac{1}{\rho}\bigg)
                  \frac{1}{\delta-u}\mathrm{d}u.
\end{equation*}
According to simple numerical calculation, we know that
\begin{equation*}
  \mathfrak{S}_0\geqslant 0.000032113949.
\end{equation*}
From (\ref{Pi(z)-asymp}), (\ref{Gamma-lower-1}) and (\ref{S-tihuan}), we obtain
\begin{equation}\label{Gamma-lower-2}
  \Gamma\geqslant e^\gamma\Delta X\Pi(z)\mathfrak{S}_0
                 +O\Bigg(\frac{\Delta X}{(\log X)^{\frac{4}{3}-\varepsilon}}\Bigg)
                 +O\Big(\Delta\big|\Psi_3-\kappa\Phi_3\big|\Big).
\end{equation}
We shall illustrate that if $X$ runs over a suitable sequence, which tends to infinity, then the second error term
in (\ref{Gamma-lower-2}) can be absorbed. Hence we need the following lemma.

\begin{lemma}\label{exponential-lemma}
  Suppose that $\alpha\in\mathbb{R}\backslash\mathbb{Q}$ and $\delta,\theta, D,H$ are defined in
  (\ref{parameter-1}) and (\ref{parameter-2}). Let $\xi(d), c(k)$ be complex numbers defined
  for $d\leqslant D,\,0<|k|\leqslant H$, respectively, which satisfy
  \begin{equation*}
     \xi(d)\ll1,\qquad c(k)\ll 1.
  \end{equation*}
  Then there exists a sequence $\{X_j\}_{j=1}^\infty$ satisfying $\lim\limits_{j\to \infty}X_j=+\infty$, such that
  the sum $S(X)$ defined by
   \begin{equation}\label{exponential-type}
     S(X)=\sum_{d\leqslant D}\xi(d)\sum_{1\leqslant|k|\leqslant H}c(k)
     \sum_{\substack{\frac{X}{2}<p\leqslant X\\ p+2\equiv0(\!\bmod d)}}(\log p)e(\alpha p^2k)
  \end{equation}
  satisfies
   \begin{equation*}
     S(X_j)\ll \frac{X_j}{\log^2X_j}, \qquad j=1,2,3,\dots.
  \end{equation*}
\end{lemma}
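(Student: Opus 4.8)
The plan is to build the sequence $\{X_j\}$ from the continued fraction of $\alpha$ and then to bound $S(X_j)$ by combining the Heath--Brown decomposition (Lemma \ref{Heath-Brown-exponent-sum-fenjie}), Weyl differencing, the elementary bound of Lemma \ref{expo-Tri-lemma} and the divisor-sum estimate of Lemma \ref{Vaughan-Lemma-2.2}, using Lemma \ref{Bombieri-Iwaniec-lemma} to pass between truncated and complete sums. Since $\alpha\notin\mathbb{Q}$, its continued fraction gives infinitely many convergents $a/q$ with $(a,q)=1$, $q\to\infty$ and $|\alpha-a/q|\leqslant q^{-2}$. Fixing an exponent $\sigma>0$ to be pinned down at the very end, I set $X_j:=q_j^{1/\sigma}$ for the $j$-th such convergent, so that $X_j\to\infty$ and, for $X=X_j$, there is an approximation with $q\asymp X^{\sigma}$ and $|\alpha-a/q|\leqslant q^{-2}$. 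The argument will deliver $S(X_j)\ll X_j\mathscr{L}^{-2}$ as soon as $\sigma$ lies in a certain window, and --- with $\delta,\eta$, hence the sieve, kept fixed as in (\ref{parameter-1}) --- it is the non-emptiness of that window that forces $\theta<10/1561$.

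First I would replace the sum over primes by a von Mangoldt sum: discarding prime powers costs $O(DHX^{1/2+\varepsilon})=O(X\mathscr{L}^{-2})$, whence
\[
 S(X)=\sum_{d\leqslant D}\xi(d)\sum_{1\leqslant|k|\leqslant H}c(k)\sum_{\substack{X/2<n\leqslant X\\ d\,\mid\, n+2}}\Lambda(n)\,e(\alpha kn^2)+O\!\big(X\mathscr{L}^{-2}\big).
\]
Applying Lemma \ref{Heath-Brown-exponent-sum-fenjie} to each inner sum with $f(n)=e(\alpha kn^2)\mathbf{1}_{d\mid n+2}$ and with $u,v,w$ chosen as suitable fixed powers of $X$ meeting its hypotheses, that inner sum splits into $O(\mathscr{L}^{10})$ sums of Type I and of Type II; since the coefficients produced by the decomposition are independent of $d$ and $k$, one obtains $S(X)\ll\mathscr{L}^{10}\max(|S_{\mathrm I}|,|S_{\mathrm{II}}|)$, where $S_{\mathrm I},S_{\mathrm{II}}$ are the corresponding sums over $d\leqslant D$, $1\leqslant|k|\leqslant H$ and bilinear variables $m,\ell$ with $m$ of size $M$, $\ell$ of size $L$, $ML\asymp X$, carrying the congruence $d\mid m\ell+2$ together with the side condition $L\geqslant w$ in Type I and $u\leqslant L\leqslant v$ in Type II.

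For a Type I sum I keep the congruence: for fixed $m$ and $d$ it confines $\ell$ to at most one residue class modulo $d/(d,m)$, realised either through its divisor structure ($d\mid m\ell+2$ summed over $d\leqslant D$, using $\sum_{d\mid n}1\ll n^{\varepsilon}$) or through additive characters modulo $d$; in either form the phase $\alpha km^2\ell^2$ stays quadratic in the new summation variable with leading coefficient a known integer multiple of $\alpha km^2$, so that one Cauchy--Schwarz step followed by Weyl differencing linearises it and the innermost sum is bounded by Lemma \ref{expo-Tri-lemma}. Summing the resulting quantities $\min\!\big(\,\cdot\,,\,\|\alpha\cdot(\text{product})\|^{-1}\big)$ over the differencing variable and over $m,k,d$, and invoking Lemma \ref{Vaughan-Lemma-2.2} with $|\alpha-a/q|\leqslant q^{-2}$ to control the harmonic average, gives $|S_{\mathrm I}|\ll X^{1-c_1}$ for some $c_1>0$ provided $q\asymp X^{\sigma}$ lies in the admissible range; here Lemma \ref{Bombieri-Iwaniec-lemma} is convenient for trading the truncation to $(X/2,X]$ (and the progression cut) for complete sums before differencing. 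For a Type II sum both $a_m$ and $b_\ell$ occur with $u\leqslant L\leqslant v$: I apply Cauchy--Schwarz in $m$, open the square, and run the usual Weyl manoeuvre, so that the diagonal $\ell_1=\ell_2$ contributes a term admissible after summation over $d$ and $k$, while the off-diagonal terms carry a phase $e\!\big(\alpha km^2(\ell_1^2-\ell_2^2)\big)$ which, after $\ell_1=\ell_2+h$, is linear in $m$ and is again controlled by Lemma \ref{expo-Tri-lemma} and then Lemma \ref{Vaughan-Lemma-2.2}, the congruences $d\mid m\ell_i+2$ being absorbed by isolating $(m,d)$ and using the divisor bound for the number of admissible $d$. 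This yields $|S_{\mathrm{II}}|\ll X^{1-c_2}$ with $c_2>0$ in the admissible range, whence $S(X_j)\ll X_j\mathscr{L}^{10}X_j^{-\min(c_1,c_2)}\ll X_j\mathscr{L}^{-2}$.

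The step I expect to be the main obstacle is not any single estimate but the exponent bookkeeping. The Type I and Type II bounds are effective only when $\sigma$ (equivalently the size of $q\asymp X^{\sigma}$) lies in a window whose endpoints are explicit functions of $\delta$, $\eta$, $\theta$ and of the Heath--Brown splitting parameters $u,v,w$; one must choose $u,v,w$ so that these windows overlap, check that the savings $c_1,c_2$ survive the $\mathscr{L}^{10}$ loss from Lemma \ref{Heath-Brown-exponent-sum-fenjie}, and then optimise the remaining free exponents. Performing this optimisation with the iterative refinement mentioned in the Remark after Theorem \ref{Theorem} is precisely what produces the threshold $\theta<10/1561$.
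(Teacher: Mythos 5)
Your overall skeleton (rational approximations $a/q$ to $\alpha$ with $X_j$ tied to $q$, reduction to $\Lambda$, Heath--Brown's decomposition into Type I and Type II sums, then Cauchy/Weyl differencing finished off by Lemma \ref{expo-Tri-lemma}, Lemma \ref{Vaughan-Lemma-2.2} and the kernel of Lemma \ref{Bombieri-Iwaniec-lemma}) is the same as the paper's. But your Type II treatment has a genuine gap. After Cauchy--Schwarz in $m$ and opening the square, the off-diagonal phase is $e\big(\alpha k m^2(\ell_1^2-\ell_2^2)\big)$, and writing $\ell_1=\ell_2+h$ only factors the $\ell$-part; the phase remains \emph{quadratic} in $m$, not linear as you claim, so Lemma \ref{expo-Tri-lemma} cannot be applied to the $m$-sum at this point and your route to Lemma \ref{Vaughan-Lemma-2.2} breaks down. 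Handling exactly this quadratic $m$-sum is where the bulk of the paper's work lies: the two congruences $d_1\mid m\ell_1+2$, $d_2\mid m\ell_2+2$ are first merged by CRT into $m\equiv f_0 \pmod{[d_1,d_2]}$ (your ``absorbed by isolating $(m,d)$'' skips the fact that two moduli act simultaneously), then one writes $m=f_0+r[d_1,d_2]$, applies Cauchy--Schwarz a \emph{second} time (so that $|S_{II}|^4$ is estimated), and differences in $r$ via $r_1-r_2=s_1$, $r_1+r_2=s_2$, which is what finally produces a phase linear in the new variable. Even then the factor $[d_1,d_2]$ forces a split at $D_0=X^{50\theta/3}$, the Bombieri--Iwaniec lemma is used to replace the $s_1$-dependent truncated $t$-range by a fixed one, and two further Cauchy steps ($\Sigma_2^{(3)}$ through $\Sigma_2^{(6)}$) are needed before Lemma \ref{Vaughan-Lemma-2.2} gives an admissible saving. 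None of this is recoverable from a single Cauchy step plus ``the usual Weyl manoeuvre.''

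A secondary but nontrivial omission is the quantitative bookkeeping you defer to the end: the admissibility window is not merely a matter of choosing $u,v,w$ abstractly (the paper takes $u=2^{-7}X^{\delta/2}$, $v=2^7X^{1/3}$, $w=X^{1/2-\delta/4}$) but also of fixing the size of $q$: the paper chooses $Q=X^{\frac{4138}{15}\theta}$ and it is only with this choice, together with the iterated-Cauchy exponents above, that both $S_I\ll X^{1-\varpi}$ and $S_{II}\ll X^{1-\varpi}$ hold for all $\theta<10/1561$. Your Type I sketch is closer in spirit to the paper (there the single differencing $r_1^2-r_2^2=s_1s_2$ does linearize the phase), though the paper still needs the case split $MT\leqslant D_0$ versus $MT>D_0$ and further Cauchy steps to reach the stated exponent; as written, your single-pass estimate gives no guarantee that the saving survives with the paper's parameters.
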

The proof of Lemma \ref{exponential-lemma} will be given in the next section. From (\ref{Psi_3-def}) and (\ref{Phi_3-def}), we know that $\Psi_3-\kappa\Phi_3$ can be represented as a sum of
type (\ref{exponential-type}) with
\begin{equation*}
  \xi(d)=\lambda^*(d)-\kappa\nu(d),
\end{equation*}
where
\begin{equation*}
  \lambda^*(d)=
  \begin{cases}
     \lambda^-(d), & \textrm{if}\,\, d|P(z), \\
     \quad0, & \textrm{otherwise}.
  \end{cases}
\end{equation*}
According to Lemma \ref{exponential-lemma} and (\ref{Gamma-lower-2}), there exists a
sequence $\{X_j\}_{j=1}^\infty$, which tends to infinity, such that
\begin{equation}\label{Gamma-lower-3}
  \Gamma(X_j)\geqslant e^\gamma\Delta X_j\Pi(z)\mathfrak{S}_0
                 +O\Bigg(\frac{\Delta X_j}{(\log X_j)^{\frac{4}{3}-\varepsilon}}\Bigg).
\end{equation}
From (\ref{Pi(z)-asymp}) and (\ref{Gamma-lower-3}), we know that there exists a positive constant $c>0$ such that
  \begin{equation*}
     \Gamma(X_j)\geqslant \frac{c\Delta(X_j) X_j}{\log X_j}>0, \qquad j=1,2,3,\dots.
  \end{equation*}
This completes the proof of Theorem \ref{Theorem}.

\section{Proof of Lemma \ref{exponential-lemma} }
In this section, we shall prove Lemma \ref{exponential-lemma}. Since $\alpha\in\mathbb{R}\backslash\mathbb{Q}$, by
Dirichlet's approximation theorem, there exist infinitely many integers $A$ and natural numbers $Q$ with $(A,Q)=1$ such that
\begin{equation*}
   \bigg|\alpha-\frac{A}{Q}\bigg|<\frac{1}{Q^2}.
\end{equation*}
For each such $Q$, we choose $X$ in a suitable way, i.e. as in (\ref{Q-choose}). In this way, we construct
our sequence $\{X_j\}_{j=1}^\infty$.

First, we have
\begin{equation*}
   S(X)=W+O\big(HX^{\frac{1}{2}+\varepsilon}\big),
\end{equation*}
where
\begin{equation*}
   W=\sum_{\frac{X}{2}<n\leqslant X}\Lambda(n)\sum_{1\leqslant|k|\leqslant H}c(k)e(\alpha n^2k)
       \sum_{\substack{d\leqslant D\\ d|n+2\\ 2\nmid d}}\xi(d).
\end{equation*}
According to Lemma \ref{Heath-Brown-exponent-sum-fenjie}, by taking $u=2^{-7}X^{\frac{\delta}{2}},\,v=2^7X^\frac{1}{3},\,w=X^{\frac{1}{2}-\frac{\delta}{4}}$, it is easy to see that
the sum $W$ can be decompose into $O(\log^{10}X)$ sums, each of which either of Type I:
\begin{equation*}
   S_I=\sum_{M<m\leqslant M_1}a_m\sum_{\substack{L<\ell\leqslant L_1\\ \frac{X}{2}<m\ell \leqslant X}}
        \sum_{1\leqslant|k|\leqslant H}c(k)e(\alpha m^2\ell^2k)
        \sum_{\substack{d\leqslant D\\ d|m\ell+2\\ 2\nmid d}}\xi(d)
\end{equation*}
with $M_1\leqslant2M,\,L_1\leqslant2L,\,L\geqslant w,\,a_m\ll m^{\varepsilon},\,ML\asymp X$; or of Type II:
\begin{equation*}
   S_{II}=\sum_{M<m\leqslant M_1}a_m\sum_{\substack{L<\ell\leqslant L_1\\ \frac{X}{2}<m\ell \leqslant X}}b_\ell
        \sum_{1\leqslant|k|\leqslant H}c(k)e(\alpha m^2\ell^2k)
        \sum_{\substack{d\leqslant D\\ d|m\ell+2\\ 2\nmid d}}\xi(d)
\end{equation*}
with $M_1\leqslant2M,\,L_1\leqslant2L,\,u\leqslant L\leqslant v,\,a_m\ll m^{\varepsilon},\,b_\ell\ll\ell^\varepsilon,\, ML\asymp X$.

Next, we shall deal with the sums of Type I and Type II in the following subsections, respectively.

\subsection{The Estimate of  Type II Sums}
In this subsection, we shall deal with the estimate of the sums of Type II. First, we have
\begin{align*}
  S_{II} = & \,\, \sum_{1\leqslant|k|\leqslant H}c(k)\sum_{M<m\leqslant M_1}a_m
           \sum_{\substack{L<\ell\leqslant L_1\\ \frac{X}{2}<m\ell\leqslant X}}b_\ell e(\alpha m^2\ell^2k)
           \sum_{\substack{d\leqslant D\\ d|m\ell+2\\ 2\nmid d}}\xi(d)
                        \nonumber \\
  \ll & \,\, X^\varepsilon \sum_{1\leqslant|k|\leqslant H}\sum_{M<m\leqslant M_1}
       \Bigg|\sum_{\substack{L<\ell\leqslant L_1\\ \frac{X}{2}<m\ell\leqslant X}}b_\ell e(\alpha m^2\ell^2k)
           \sum_{\substack{d\leqslant D\\ d|m\ell+2\\ 2\nmid d}}\xi(d) \Bigg|.
\end{align*}
By Cauchy's inequality, we get
\begin{align*}
  & \,\,|S_{II}|^2 \ll X^\varepsilon HM\sum_{1\leqslant|k|\leqslant H}\sum_{M<m\leqslant M_1}
                       \Bigg|\sum_{\substack{L<\ell\leqslant L_1\\ \frac{X}{2}<m\ell\leqslant X}}b_\ell
                       e(\alpha m^2\ell^2k)\sum_{\substack{d\leqslant D\\ d|m\ell+2\\ 2\nmid d}}\xi(d) \Bigg|^2
                                 \nonumber \\
 = & \,\, X^\varepsilon HM\sum_{1\leqslant|k|\leqslant H}\sum_{M<m\leqslant M_1}
            \sum_{\substack{L<\ell\leqslant L_1\\ \frac{X}{2}<m\ell\leqslant X}}
            \sum_{\substack{d_1,d_2\leqslant D\\ m\ell_1+2\equiv0(\!\bmod d_1)\\ m\ell_2+2\equiv0(\!\bmod d_2)\\
            (d_1d_1,2)=1}}b_{\ell_1}\overline{b_{\ell_2}}\xi(d_1)\overline{\xi(d_2)}
            e\big(\alpha m^2k(\ell_1^2-\ell_2^2)\big)
                                 \nonumber \\
  \ll & \, X^\varepsilon HM\sum_{1\leqslant|k|\leqslant H}
           \sum_{L<\ell_1,\ell_2\leqslant L_1}\sum_{\substack{d_1,d_2\leqslant D\\ (d_1d_2,2)=1\\ (\ell_1,d_1)=(\ell_2,d_2)=1}}
           \Bigg|\sum_{\substack{M'<m\leqslant M_1'\\ m\ell_1+2\equiv0(\!\bmod d_1)\\ m\ell_2+2\equiv0(\!\bmod d_2)}} e\big(\alpha m^2k(\ell_1^2-\ell_2^2)\big) \Bigg|
                                   \nonumber \\
  \ll & \, X^\varepsilon HM\sum_{1\leqslant|k|\leqslant H}  \sum_{L<\ell_1,\ell_2\leqslant L_1}
        \sum_{\substack{d_1,d_2\leqslant D\\ (d_1d_2,2)=1\\ (\ell_1,d_1)=(\ell_2,d_2)=1}}|\mathcal{V}|,
\end{align*}
where
\begin{equation*}
   \mathcal{V}=\sum_{\substack{M'<m\leqslant M_1'\\ m\ell_1+2\equiv0(\!\bmod d_1)\\ m\ell_2+2\equiv0(\!\bmod d_2)}} e\big(\alpha m^2k(\ell_1^2-\ell_2^2)\big),
\end{equation*}
\begin{equation*}
   M'=\max\bigg(M,\frac{X}{2\ell_1},\frac{X}{2\ell_2}\bigg),\qquad M_1'=\min\bigg(M_1,\frac{X}{\ell_1},\frac{X}{\ell_2}\bigg).
\end{equation*}
If the system of the congruence
\begin{equation}\label{congruence-sys-1}
  \begin{cases}
    m\ell_1+2\equiv0(\bmod d_1) \\
    m\ell_2+2\equiv0(\bmod d_2)
  \end{cases}
\end{equation}
has no solution, then $\mathcal{V}=0$. Assume that (\ref{congruence-sys-1}) has a solution.
Then there exists an $f_0=f_0(\ell_1,\ell_2,d_1,d_2)$ such that (\ref{congruence-sys-1}) is
equivalent to $m\equiv f_0(\bmod {[d_1,d_2]})$. In this case, we have
\begin{align*}
 |\mathcal{V}|= &\,\, \Bigg|\sum_{\substack{M'<m\leqslant M_1'\\ m\equiv f_0(\bmod {[d_1,d_2]})}}
                         e\big(\alpha m^2k(\ell_1^2-\ell_2^2)\big)\Bigg|
                                                     \nonumber \\
= &\,\,\Bigg|\sum_{\frac{M'-f_0}{[d_1,d_2]}<r\leqslant\frac{M_1'-f_0}{[d_1,d_2]}}
 e\Big(\alpha\big(f_0+r[d_1,d_2]\big)^2k\big(\ell_1^2-\ell_2^2\big)\Big)\Bigg|
                     \nonumber \\
 = &\,\,\Bigg|\sum_{\frac{M'-f_0}{[d_1,d_2]}<r\leqslant\frac{M_1'-f_0}{[d_1,d_2]}}
    e\Big(\alpha\big(r^2[d_1,d_2]^2+2f_0r[d_1,d_2]\big)k\big(\ell_1^2-\ell_2^2\big)\Big)\Bigg|
                     \nonumber \\
 = &\,\,\Bigg|\sum_{R<r\leqslant R_1}
    e\Big(\alpha\big(r^2[d_1,d_2]^2+2f_0r[d_1,d_2]\big)k\big(\ell_1^2-\ell_2^2\big)\Big)\Bigg|,
\end{align*}
where
\begin{equation*}
   R=\frac{M'-f_0}{[d_1,d_2]},\qquad R_1=\frac{M_1'-f_0}{[d_1,d_2]}.
\end{equation*}
The contribution of $\mathcal{V}$ with $\ell_1=\ell_2$ to $|S_{II}|^2$ is
\begin{align*}
  \ll &\,\, X^\varepsilon HM^2\sum_{1\leqslant k\leqslant H}
            \sum_{\substack{L<\ell_1,\ell_2\leqslant L_1\\ \ell_1=\ell_2}}
            \sum_{d_1,d_2\leqslant D}\frac{1}{[d_1,d_2]}
                       \nonumber \\
   \ll &\,\,X^\varepsilon H^2M^2L\sum_{d_1,d_2\leqslant D}\frac{1}{[d_1,d_2]}
             \ll X^\varepsilon H^2M^2L\sum_{h\leqslant D^2}\frac{\tau^2(h)}{h}
                     \nonumber \\
   \ll &\,\, X^\varepsilon H^2M^2L\ll X^{1+\varepsilon}H^2M.
\end{align*}
Therefore, we have
\begin{equation*}
   |S_{II}|^2\ll X^{1+\varepsilon}H^2M+X^\varepsilon HM\sum_{1\leqslant k\leqslant H}
                \sum_{\substack{d_1,d_2\leqslant D\\ (d_1d_2,2)=1}}
                \sum_{\substack{L<\ell_1,\ell_2\leqslant L_1\\ (\ell_1,d_1)=(\ell_2,d_2)=1\\ \ell_1\not=\ell_2}}
                |\mathcal{V}|.
\end{equation*}
Moreover, by Cauchy's inequality again, we obtain
\begin{align}\label{S_(II)-fourth}
  |S_{II}|^4 \ll &\,\, X^{2+\varepsilon}H^4M^2+X^{\varepsilon}H^3M^2\sum_{1\leqslant k\leqslant H}
                     \Bigg(\sum_{\substack{d_1,d_2\leqslant D\\ (d_1d_2,2)=1}}
   \sum_{\substack{L<\ell_1,\ell_2\leqslant L_1\\ (\ell_1,d_1)=(\ell_2,d_2)=1\\ \ell_1\not=\ell_2}}
                     |\mathcal{V}|\Bigg)^2
                        \nonumber \\
     \ll &\,\, X^{2+\varepsilon}H^4M^2+X^{\varepsilon}H^3M^2\sum_{1\leqslant k\leqslant H}
     \Bigg(\sum_{\substack{d_1,d_2\leqslant D\\ (d_1d_2,2)=1}}\frac{1}{[d_1,d_2]} \Bigg)
                          \nonumber \\
      &\,\, \quad \times \sum_{\substack{d_1,d_2\leqslant D\\ (d_1d_2,2)=1}} [d_1,d_2]
            \Bigg(\sum_{\substack{L<\ell_1,\ell_2\leqslant L_1\\ (\ell_1,d_1)=(\ell_2,d_2)=1\\ \ell_1\not=\ell_2}}|\mathcal{V}|\Bigg)^2
                         \nonumber \\
     \ll &\,\,  X^{2+\varepsilon}H^4M^2+X^{\varepsilon}H^3M^2L^2
                \Bigg(\sum_{\substack{d_1,d_2\leqslant D\\ (d_1d_2,2)=1}}\frac{1}{[d_1,d_2]} \Bigg)
                      \nonumber \\
      &\,\, \quad \times \sum_{1\leqslant k\leqslant H} \sum_{\substack{d_1,d_2\leqslant D\\ (d_1d_2,2)=1}}
                  [d_1,d_2]\sum_{\substack{L<\ell_1,\ell_2\leqslant L_1\\ (\ell_1,d_1)=(\ell_2,d_2)=1\\ \ell_1\not=\ell_2}}|\mathcal{V}|^2
                        \nonumber \\
    \ll &\,\, X^{2+\varepsilon}H^4M^2+X^{\varepsilon}H^3M^2L^2
             \sum_{1\leqslant k\leqslant H} \sum_{\substack{d_1,d_2\leqslant D\\ (d_1d_2,2)=1}}
                  [d_1,d_2]\sum_{\substack{L<\ell_1,\ell_2\leqslant L_1\\ (\ell_1,d_1)=(\ell_2,d_2)=1\\ \ell_1\not=\ell_2}}|\mathcal{V}|^2
                          \nonumber \\
    \ll &\,\,  X^{2+\varepsilon}H^4M^2+X^{\varepsilon}H^3M^2L^2\cdot\Sigma_0,
\end{align}
where
\begin{align*}
   \Sigma_0 = & \,\, \sum_{1\leqslant k\leqslant H} \sum_{\substack{d_1,d_2\leqslant D\\ (d_1d_2,2)=1}}[d_1,d_2]
                 \sum_{\substack{L<\ell_1,\ell_2\leqslant L_1\\ (\ell_1,d_1)=(\ell_2,d_2)=1\\ \ell_1\not=\ell_2}}
                          \nonumber \\
   & \,\, \times \sum_{R<r_1,r_2\leqslant R_1}e\Big(\alpha\big((r_1^2-r_2^2)[d_1,d_2]^2+2f_0(r_1-r_2)[d_1,d_2]\big)k\big(\ell_1^2-\ell_2^2\big)\Big).
\end{align*}
For $\Sigma_0$, we have
\begin{align*}
   \Sigma_0 = & \,\, \sum_{1\leqslant k\leqslant H} \sum_{\substack{d_1,d_2\leqslant D\\ (d_1d_2,2)=1}}[d_1,d_2]
                 \sum_{\substack{L<\ell_1,\ell_2\leqslant L_1\\ (\ell_1,d_1)=(\ell_2,d_2)=1\\ \ell_1\not=\ell_2}}
                          \nonumber \\
   & \,\, \times \sum_{s_1,s_2}\Bigg(\sum_{\substack{R<r_1,r_2\leqslant R_1\\ r_1-r_2=s_1\\ r_1+r_2=s_2}}1\Bigg)
   e\Big(\alpha\big(s_1s_2[d_1,d_2]^2+2f_0s_1[d_1,d_2]\big)k\big(\ell_1^2-\ell_2^2\big)\Big)
                             \nonumber \\
   = & \,\, \sum_{1\leqslant k\leqslant H} \sum_{\substack{d_1,d_2\leqslant D\\ (d_1d_2,2)=1}}[d_1,d_2]
                 \sum_{\substack{L<\ell_1,\ell_2\leqslant L_1\\ (\ell_1,d_1)=(\ell_2,d_2)=1\\ \ell_1\not=\ell_2}}
                              \nonumber \\
   & \,\, \times  \sum_{\substack{s_1,s_2:\,\,s_1\equiv s_2(\!\bmod 2)\\ 2R<s_2+s_1\leqslant 2R_1\\ 2R<s_2-s_1\leqslant 2R_1}}
                 e\Big(\alpha\big(s_1s_2[d_1,d_2]^2+2f_0s_1[d_1,d_2]\big)k\big(\ell_1^2-\ell_2^2\big)\Big)
                              \nonumber \\
  = & \,\,  \sum_{1\leqslant k\leqslant H}\sum_{\substack{d_1,d_2\leqslant D\\ (d_1d_2,2)=1}}[d_1,d_2]
             \sum_{\substack{L<\ell_1,\ell_2\leqslant L_1\\ (\ell_1,d_1)=(\ell_2,d_2)=1\\ \ell_1\not=\ell_2}}
             \sum_{|s_1|\leqslant 2R_1-2R}e\Big(2\alpha f_0s_1[d_1,d_2]k(\ell_1^2-\ell_2^2)\Big)
                                \nonumber \\
  & \,\, \times   \sum_{\substack{s_2:\,\,s_2\equiv s_1(\!\bmod 2)\\
                         2R<s_2+s_1\leqslant 2R_1\\ 2R<s_2-s_1\leqslant 2R_1}}
                  e\Big(\alpha s_1s_2[d_1,d_2]^2k\big(\ell_1^2-\ell_2^2\big)\Big) .
\end{align*}
Set
\begin{equation*}
   D_0= X^{\frac{50}{3}\theta}.
\end{equation*}
Then we divide $\Sigma_0$ into two parts
\begin{equation}\label{Sigma_0-two-parts}
   \Sigma_0=\Sigma_1+\Sigma_2,
\end{equation}
where $\Sigma_1$ denotes the part of $\Sigma_0$ which satisfies $[d_1,d_2]\leqslant D_0$, while $\Sigma_2$ denotes
the remaining part of $\Sigma_0$ which satisfies $[d_1,d_2]>D_0$. We set $s_2=s_1+2t$ in $\Sigma_1$ and $\Sigma_2$,
and derive that
\begin{align}\label{Sigma_1-upper-1}
                  \Sigma_1
 \leqslant& \,\, \sum_{1\leqslant k\leqslant H}\sum_{\substack{d_1,d_2\leqslant D_0\\ (d_1d_2,2)=1}}[d_1,d_2]
     \sum_{\substack{L<\ell_1,\ell_2\leqslant L_1\\ (\ell_1,d_1)=(\ell_2,d_2)=1\\ \ell_1\not=\ell_2}}
                       \nonumber \\
  &\,\, \quad\times \sum_{|s_1|\leqslant 2R_1-2R}\Bigg|\sum_{R'<t\leqslant R_1'}
      e\Big(2\alpha s_1t[d_1,d_2]^2k(\ell_1^2-\ell_2^2)\Big)\Bigg|
\end{align}
and
\begin{align}\label{Sigma_2-upper-1}
                  \Sigma_2
 \leqslant& \,\, \sum_{1\leqslant k\leqslant H}\sum_{\substack{d_1,d_2\leqslant D\\ (d_1d_2,2)=1\\ [d_1,d_2]>D_0}}[d_1,d_2]
     \sum_{\substack{L<\ell_1,\ell_2\leqslant L_1\\ (\ell_1,d_1)=(\ell_2,d_2)=1\\ \ell_1\not=\ell_2}}
                       \nonumber \\
  &\,\, \quad\times \sum_{|s_1|\leqslant 2R_1-2R}\Bigg|\sum_{R'<t\leqslant R_1'}
      e\Big(2\alpha s_1t[d_1,d_2]^2k(\ell_1^2-\ell_2^2)\Big)\Bigg|,
\end{align}
where
\begin{equation*}
   R'=\max\big(R-s_1,R\big),\qquad R_1'=\min\big(R_1-s_1,R_1\big).
\end{equation*}
First, we consider the upper bound for $\Sigma_1$. Let $\Sigma_1^{(1)}$ and $\Sigma_1^{(2)}$ denote the contribution of the right--hand side of (\ref{Sigma_1-upper-1}) for $s_1\not=0$ and $s_1=0$, respectively. Trivially, there holds
\begin{equation}\label{Sigma_1(2)-upper}
   \Sigma_1^{(2)}\ll HML^2\sum_{\substack{d_1,d_2\leqslant D_0\\(d_1d_2,2)=1}}1\ll
   HML^2D_0^2\ll D_0^2HXL.
\end{equation}
For $\Sigma_1^{(1)}$, by Lemma \ref{expo-Tri-lemma} we have
\begin{align}\label{Sigma_1(1)<tau_7}
 \Sigma_1^{(1)} \ll &\, \sum_{1\leqslant k\leqslant H}\sum_{d_1,d_2\leqslant D_0}[d_1,d_2]
                        \sum_{\substack{L<\ell_1,\ell_2\leqslant L_1\\ \ell_1\not=\ell_2}}
                            \nonumber \\
    &\,\qquad\times  \sum_{0<|s|\leqslant\frac{2M}{[d_1,d_2]}}
           \min\Bigg(\frac{M}{[d_1,d_2]},\frac{1}{\big\|2\alpha s[d_1,d_2]^2k(\ell_1^2-\ell_2^2)\big\|}\Bigg)
                              \nonumber \\
   \ll &\, \sum_{1\leqslant k\leqslant H}\sum_{h\leqslant D_0^2}h
           \Bigg(\sum_{\substack{d_1,d_2\leqslant D_0\\ [d_1,d_2]=h}}1\Bigg)
           \sum_{\substack{L<\ell_1,\ell_2\leqslant L_1\\ \ell_1\not=\ell_2}}
           \sum_{0<|s|\leqslant\frac{2M}{h}}
           \min\Bigg(\frac{M}{h},\frac{1}{\big\|2\alpha sh^2k(\ell_1^2-\ell_2^2)\big\|}\Bigg)
                               \nonumber \\
   \ll &\, D_0^2 \sum_{1\leqslant k\leqslant H}\sum_{h\leqslant D_0^2}
           \sum_{\substack{L<\ell_1,\ell_2\leqslant L_1\\ \ell_1\not=\ell_2}}
           \sum_{0<|s|\leqslant2M}\min\Bigg(M,\frac{1}{\big\|2\alpha sh^2k(\ell_1^2-\ell_2^2)\big\|}\Bigg)
                              \nonumber \\
   \ll &\, D_0^2\sum_{1\leqslant k\leqslant H}\sum_{h\leqslant D_0^2}
            \sum_{t_1,t_2}\Bigg(\sum_{\substack{L<\ell_1,\ell_2\leqslant L_1\\ \ell_1-\ell_2=t_1\\ \ell_1+\ell_2=t_2\\ \ell_1\not=\ell_2}}1\Bigg)
            \sum_{0<|s|\leqslant2M}\min\Bigg(M,\frac{1}{\big\|2\alpha sh^2kt_1t_2\big\|}\Bigg)
                              \nonumber \\
   \ll &\, D_0^2\sum_{1\leqslant k\leqslant H}\sum_{h\leqslant D_0^2}
           \sum_{\substack{1\leqslant|t_1|\leqslant L\\ 1\leqslant t_2\leqslant4L}}
            \sum_{0<|s|\leqslant2M}\min\Bigg(M,\frac{1}{\big\|2\alpha sh^2kt_1t_2\big\|}\Bigg)
                               \nonumber \\
   \ll &\, D_0^2\sum_{1\leqslant k\leqslant H}\sum_{h\leqslant D_0^2} \sum_{1\leqslant t_1,t_2\leqslant4L}
            \sum_{1\leqslant|s|\leqslant2M}\min\Bigg(M,\frac{1}{\big\|2\alpha sh^2kt_1t_2\big\|}\Bigg)
                              \nonumber \\
    \ll &\, D_0^2\sum_{1\leqslant m\leqslant64D_0^4HML^2}\tau_7(m)
            \min\bigg(M,\frac{1}{\|\alpha m\|}\bigg).
\end{align}
By Lemma \ref{Vaughan-Lemma-2.2}, we have
\begin{align}\label{tau_7-upper}
      & \,\, \sum_{1\leqslant m\leqslant64D_0^4HML^2}\tau_7(m)\min\bigg(M,\frac{1}{\|\alpha m\|}\bigg)
                            \nonumber \\
  \ll & \, X^\varepsilon\sum_{1\leqslant m\leqslant64D_0^4HML^2}
          \min\Bigg(\frac{64D_0^4HM^2L^2}{m},\frac{1}{\|\alpha m\|}\Bigg)
                            \nonumber \\
  \ll &\, X^\varepsilon D_0^4HM^2L^2
          \bigg(\frac{1}{Q}+\frac{1}{M}+\frac{Q}{D_0^4HM^2L^2}\bigg)
                            \nonumber \\
  \ll &\, X^\varepsilon \bigg(\frac{HX^2D_0^4}{Q}+HXLD_0^4+Q\bigg).
\end{align}
Combining (\ref{Sigma_1-upper-1}), (\ref{Sigma_1(2)-upper}), (\ref{Sigma_1(1)<tau_7}) and (\ref{tau_7-upper}), and
by noting the fact that $ML\asymp X$, we get
\begin{align}\label{Sigma_1-upper-2}
   \Sigma_1 \ll X^\varepsilon\Big(HX^2D_0^6Q^{-1}+HXLD_0^6+QD_0^2\Big).
\end{align}

Now, we consider the estimate of $\Sigma_2$. According to (\ref{Sigma_2-upper-1}), by a splitting argument, we have
\begin{equation}\label{Sigma_2-splitting}
   \Sigma_2\ll \mathscr{L} \max_{D_0\ll T\ll D^2}\big(T\Sigma_2^{(1)}\big),
\end{equation}
where
\begin{align*}
   \Sigma_2^{(1)}=\Sigma_2^{(1)}(T)=& \,\,\sum_{1\leqslant k \leqslant H}
   \sum_{\substack{d_1,d_2\leqslant D\\ (d_1d_2,2)=1\\ T<[d_1,d_2]\leqslant2T}}
   \sum_{\substack{L<\ell_1,\ell_2\leqslant L_1\\ (\ell_1,d_1)=(\ell_2,d_2)=1\\ \ell_1\not=\ell_2}}
                 \nonumber \\
   & \,\,\quad\times\sum_{|s_1|\leqslant2R_1-2R}\Bigg|\sum_{R'<t\leqslant R_1'}
           e\Big(2\alpha s_1t[d_1,d_2]^2k\big(\ell_1^2-\ell_2^2\big)\Big)\Bigg|.
\end{align*}
By Lemma \ref{Bombieri-Iwaniec-lemma}, we have
\begin{align}\label{Sigma_2-upper-12}
   \Sigma_2^{(1)}=\Sigma_2^{(1)}(T)\leqslant & \,\,\sum_{1\leqslant k \leqslant H}
         \sum_{\substack{d_1,d_2\leqslant D\\ T<[d_1,d_2]\leqslant2T}}
         \sum_{\substack{L<\ell_1,\ell_2\leqslant L_1\\ \ell_1\not=\ell_2}}\sum_{|s|\leqslant\frac{2M}{T}}
                                 \nonumber \\
       & \,\,\qquad \times\int_{-\infty}^{+\infty}\mathcal{K}(\theta)
              \Bigg|\sum_{\frac{M}{4T}<t\leqslant\frac{4M}{T}}
              e\Big(2\alpha st[d_1,d_2]^2k\big(\ell_1^2-\ell_2^2\big)+\theta t\Big)\Bigg|\mathrm{d}\theta
                          \nonumber \\
       =: &\,\, \int_{-\infty}^{+\infty}\mathcal{K}(\theta)\cdot\Sigma_{2}^{(2)}(\theta,T) \mathrm{d}\theta,
\end{align}
where
\begin{equation*}
   \mathcal{K}(\theta)=\min\bigg(\frac{15M}{4T}+1,\frac{1}{\pi|\theta|},\frac{1}{\pi^2\theta^2}\bigg),
\end{equation*}
and
\begin{align*}
        \Sigma_{2}^{(2)}(\theta,T)
= &\,\, \sum_{1\leqslant k \leqslant H} \sum_{\substack{d_1,d_2\leqslant D\\ T<[d_1,d_2]\leqslant2T}}
         \sum_{\substack{L<\ell_1,\ell_2\leqslant L_1\\ \ell_1\not=\ell_2}}\sum_{|s|\leqslant\frac{2M}{T}}
                  \nonumber \\
  &\,\,\qquad\times   \Bigg|\sum_{\frac{M}{4T}<t\leqslant\frac{4M}{T}}
              e\Big(2\alpha st[d_1,d_2]^2k\big(\ell_1^2-\ell_2^2\big)+\theta t\Big)\Bigg|.
\end{align*}
According to (\ref{Bom-Iwaniec-1}) and (\ref{Sigma_2-upper-12}), it is easy to see that
\begin{equation}\label{Sigma_2(1)-max}
   \Sigma_2^{(1)}\ll \mathscr{L}\max_{0\leqslant\theta\leqslant1}\Sigma_2^{(2)}(\theta,T).
\end{equation}
For $\Sigma_2^{(2)}(\theta,T)$, we have
\begin{align}\label{Sigma_2(2)-S_3}
           \Sigma_2^{(2)}(\theta,T)
 = & \,\, \sum_{1\leqslant k\leqslant H}\sum_{T<h\leqslant2T}
          \Bigg(\sum_{\substack{d_1,d_2\leqslant D\\ [d_1,d_2]=h}}1\Bigg)
          \sum_{\substack{L<\ell_1,\ell_2\leqslant L_1\\ \ell_1\not=\ell_2}}\sum_{|s|\leqslant\frac{2M}{T}}
                     \nonumber \\
 & \,\,\qquad \times\Bigg|\sum_{\frac{M}{4T}<t\leqslant\frac{4M}{T}}
              e\Big(2\alpha sth^2k\big(\ell_1^2-\ell_2^2\big)+\theta t\Big)\Bigg|
                      \nonumber \\
  \ll & \,\,  \sum_{1\leqslant k\leqslant H}\sum_{T<h\leqslant2T} \tau^2(h)
             \sum_{\substack{L<\ell_1,\ell_2\leqslant L_1\\ \ell_1\not=\ell_2}}\sum_{|s|\leqslant\frac{2M}{T}}
             \Bigg|\sum_{\frac{M}{4T}<t\leqslant\frac{4M}{T}}
              e\Big(2\alpha sth^2k\big(\ell_1^2-\ell_2^2\big)+\theta t\Big)\Bigg|
                        \nonumber \\
   = & \,\, \sum_{1\leqslant k\leqslant H}\sum_{T<h\leqslant2T} \tau^2(h)\sum_{t_1,t_2}
             \Bigg(\sum_{\substack{L<\ell_1,\ell_2\leqslant L_1\\ \ell_1-\ell_2=t_1\\ \ell_1+\ell_2=t_2\\ \ell_1\not=\ell_2}}1\Bigg)
                        \nonumber \\
    & \,\,\qquad \times  \sum_{|s|\leqslant\frac{2M}{T}}
             \Bigg|\sum_{\frac{M}{4T}<t\leqslant\frac{4M}{T}}
              e\Big(2\alpha sth^2kt_1t_2+\theta t\Big)\Bigg|
                      \nonumber \\
    \ll & \,\, \sum_{1\leqslant k\leqslant H}\sum_{T<h\leqslant2T} \tau^2(h)
               \sum_{1\leqslant|t_1|,|t_2|\leqslant4L}\sum_{|s|\leqslant\frac{2M}{T}}
               \Bigg|\sum_{\frac{M}{4T}<t\leqslant\frac{4M}{T}}e\Big(2\alpha sth^2kt_1t_2+\theta t\Big)\Bigg|
                      \nonumber \\
    \ll & \,\, \mathscr{L}^3 HML^2+\sum_{1\leqslant k\leqslant H}\sum_{T<h\leqslant2T} \tau^2(h)
                 \sum_{1\leqslant|t_1|,|t_2|\leqslant4L}\sum_{1\leqslant|s|\leqslant\frac{2M}{T}}
                      \nonumber \\
          & \,\,\qquad\qquad\qquad\qquad \times \Bigg|\sum_{\frac{M}{4T}<t\leqslant\frac{4M}{T}}
                         e\Big(2\alpha sth^2kt_1t_2+\theta t\Big)\Bigg|
                          \nonumber \\
    \ll & \,\, \mathscr{L}^3 HML^2+\sum_{1\leqslant k\leqslant H}\sum_{T<h\leqslant2T} \tau^2(h)
               \sum_{1\leqslant|m|\leqslant\frac{32ML^2}{T}}\tau_3(|m|)
                         \nonumber \\
    & \,\,\qquad\qquad\qquad\qquad \times\Bigg|\sum_{\frac{M}{4T}<t\leqslant\frac{4M}{T}}
                         e\Big(2\alpha th^2km+\theta t\Big)\Bigg|
                           \nonumber \\
    = & \,\, \mathscr{L}^3 HML^2+\Sigma_2^{(3)},\quad \textrm{say}.
\end{align}
It follows from Cauchy's inequality that
\begin{align}\label{Sigma_2(3)-suqare-S_4}
              \big(\Sigma_2^{(3)}\big)^2
   \ll & \,\, H\sum_{1\leqslant k\leqslant H}
             \Bigg(\sum_{T<h\leqslant2T}\tau^2(h)\sum_{1\leqslant|m|\leqslant\frac{32ML^2}{T}}\tau_3(|m|)
              \Bigg|\sum_{\frac{M}{4T}<t\leqslant\frac{4M}{T}}e\Big(2\alpha th^2km+\theta t\Big)\Bigg| \Bigg)^2
                    \nonumber \\
  \ll & \,\, H\sum_{1\leqslant k\leqslant H}\Bigg(\sum_{T<h\leqslant2T}\tau^4(h) \Bigg)
              \sum_{T<h\leqslant2T}\Bigg(\sum_{1\leqslant|m|\leqslant\frac{32ML^2}{T}}\tau_3(|m|)
                     \nonumber \\
  & \,\,\qquad\qquad\times\Bigg|\sum_{\frac{M}{4T}<t\leqslant\frac{4M}{T}}
              e\Big(2\alpha th^2km+\theta t\Big)\Bigg| \Bigg)^2
                       \nonumber \\
  \ll & \,\, H\Bigg(\sum_{T<h\leqslant2T}\tau^4(h) \Bigg)
             \Bigg(\sum_{1\leqslant|m|\leqslant\frac{32ML^2}{T}}\tau_3^2(|m|)\Bigg)
             \sum_{1\leqslant k\leqslant H}\sum_{T<h\leqslant2T}
                        \nonumber \\
   & \,\, \qquad\qquad\times\sum_{1\leqslant|m|\leqslant\frac{32ML^2}{T}}
          \Bigg|\sum_{\frac{M}{4T}<t\leqslant\frac{4M}{T}}e\Big(2\alpha th^2km+\theta t\Big)\Bigg|^2
                       \nonumber \\
  \ll & \,\, \mathscr{L}^{23} HML^2 \sum_{1\leqslant k\leqslant H}\sum_{T<h\leqslant2T}
             \sum_{1\leqslant|m|\leqslant\frac{32ML^2}{T}}
              \Bigg|\sum_{\frac{M}{4T}<t\leqslant\frac{4M}{T}}e\Big(2\alpha th^2km+\theta t\Big)\Bigg|^2
                       \nonumber \\
  = & \,\,\mathscr{L}^{23} HML^2\cdot \Sigma_2^{(4)},\quad \textrm{say}.
\end{align}
For $\Sigma_2^{(4)}$, we have
\begin{align}\label{Sigma_2(4)-S_5}
              \Sigma_2^{(4)}
    = & \,\, \sum_{1\leqslant k\leqslant H}\sum_{T<h\leqslant2T}\sum_{1\leqslant|m|\leqslant\frac{32ML^2}{T}}
             \sum_{\frac{M}{4T}<t_1,t_2\leqslant\frac{4M}{T}}
              e\Big(\big(2\alpha h^2km+\theta\big)(t_1-t_2)\Big)
                       \nonumber \\
    \ll  & \,\, \sum_{1\leqslant k\leqslant H}\sum_{1\leqslant|m|\leqslant\frac{32ML^2}{T}}
                \sum_{\frac{M}{4T}<t_1,t_2\leqslant\frac{4M}{T}}
                \Bigg|\sum_{T<h\leqslant2T} e\Big(\big(2\alpha h^2km\big)(t_1-t_2)\Big)\Bigg|
                        \nonumber \\
   \ll  & \,\, \frac{HM^2L^2}{T} +\frac{M}{T}\sum_{1\leqslant k\leqslant H}
              \sum_{1\leqslant|m|\leqslant\frac{32ML^2}{T}}\sum_{1\leqslant|n|\leqslant\frac{4M}{T}}
               \Bigg|\sum_{T<h\leqslant2T} e\big(2\alpha h^2kmn\big)\Bigg|
                          \nonumber \\
  \ll  & \,\, \frac{HM^2L^2}{T} +\frac{M}{T}\sum_{1\leqslant k\leqslant H}
              \sum_{1\leqslant|s|\leqslant\frac{256M^2L^2}{T^2}}\tau_3(|s|)
              \Bigg|\sum_{T<h\leqslant2T} e\big(\alpha h^2ks\big)\Bigg|
                          \nonumber \\
  = & \,\, \frac{HM^2L^2}{T}+\frac{M}{T}\cdot\Sigma_2^{(5)},\quad \textrm{say}.
\end{align}
By Cauchy's inequality, we deduce that
\begin{align}\label{Sigma_2(5)-suqare-S_6}
             (\Sigma_2^{(5)})^2
 \ll & \,\, H\sum_{1\leqslant k\leqslant H}\Bigg(\sum_{1\leqslant s\leqslant\frac{256M^2L^2}{T^2}}
            \tau_3(s)\Bigg|\sum_{T<h\leqslant2T}e(\alpha h^2ks)\Bigg|\Bigg)^2
                  \nonumber \\
 \ll & \,\, H\Bigg(\sum_{1\leqslant s\leqslant\frac{256M^2L^2}{T^2}}\tau_3^2(s)\Bigg)
            \sum_{1\leqslant k\leqslant H}\sum_{1\leqslant s\leqslant\frac{256M^2L^2}{T^2}}
            \Bigg|\sum_{T<h\leqslant2T}e(\alpha h^2ks)\Bigg|^2
                   \nonumber \\
 \ll & \,\, \frac{\mathscr{L}^{8} HM^2L^2}{T^2}\sum_{1\leqslant k\leqslant H}
            \sum_{1\leqslant s\leqslant\frac{256M^2L^2}{T^2}}\sum_{T<h_1,h_2\leqslant2T}
             e\big(\alpha ks\big(h_1^2-h_2^2\big)\big)
                   \nonumber \\
  \ll & \,\, \frac{\mathscr{L}^{8} H^2M^4L^4}{T^3}+\frac{\mathscr{L}^{8} HM^2L^2}{T^2}
             \sum_{1\leqslant k\leqslant H}\sum_{1\leqslant s\leqslant\frac{256M^2L^2}{T^2}}
             \sum_{\substack{T<h_1,h_2\leqslant2T\\ h_1\not=h_2}}
             e\big(\alpha ks\big(h_1^2-h_2^2\big)\big)
                    \nonumber \\
  = & \,\, \frac{\mathscr{L}^{8} H^2M^4L^4}{T^3}+
           \frac{\mathscr{L}^{8} HM^2L^2}{T^2}\cdot \Sigma_2^{(6)}, \quad \textrm{say}.
\end{align}
For $\Sigma_2^{(6)}$, from Lemma \ref{expo-Tri-lemma} we have
\begin{align}\label{Sigma_2(6)-upper-1}
              \Sigma_2^{(6)}
  = & \,\, \sum_{1\leqslant k\leqslant H}\sum_{t_1,t_2}
           \Bigg(\sum_{\substack{T<h_1,h_2\leqslant2T\\ h_1-h_2=t_1\\ h_1+h_2=t_2\\ h_1\not=h_2}}1\Bigg)
           \sum_{1\leqslant s\leqslant\frac{256M^2L^2}{T^2}}e(\alpha kst_1t_2)
                  \nonumber \\
\ll & \,\, \sum_{1\leqslant k\leqslant H}\sum_{1\leqslant t_1,t_2\leqslant4T}
             \Bigg|\sum_{1\leqslant s\leqslant\frac{256M^2L^2}{T^2}}e(\alpha kst_1t_2)\Bigg|
                   \nonumber \\
\ll & \,\, \sum_{1\leqslant k\leqslant H}\sum_{1\leqslant t_1,t_2\leqslant4T}
             \min\bigg(\frac{M^2L^2}{T^2},\frac{1}{\|\alpha kt_1t_2\|}\bigg)
                   \nonumber \\
\ll & \,\,\sum_{1\leqslant n\leqslant16HT^2}\tau_3(n) \min\bigg(\frac{M^2L^2}{T^2},\frac{1}{\|\alpha n \|}\bigg).
\end{align}
It follows from Lemma \ref{Vaughan-Lemma-2.2} that
\begin{align}\label{Sigma_2(6)-upper-2}
    & \,\, \sum_{1\leqslant n\leqslant16HT^2}\tau_3(n) \min\bigg(\frac{M^2L^2}{T^2},\frac{1}{\|\alpha n \|}\bigg)
                    \nonumber \\
  \ll & \,\, X^\varepsilon \sum_{1\leqslant n\leqslant16HT^2}
             \min\bigg(\frac{16HM^2L^2}{n},\frac{1}{\|\alpha n \|}\bigg)
                     \nonumber \\
  \ll & \,\, X^\varepsilon HM^2L^2\bigg(\frac{1}{Q}+\frac{T^2}{M^2L^2}+\frac{Q}{HM^2L^2}\bigg)
                     \nonumber \\
  \ll & \,\,X^\varepsilon \big(HX^2Q^{-1}+HT^2+Q\big).
\end{align}
From (\ref{Sigma_2(5)-suqare-S_6}), (\ref{Sigma_2(6)-upper-1}) and (\ref{Sigma_2(6)-upper-2}), we obtain
\begin{equation}\label{Sigma_2(5)-hui}
  \Sigma_2^{(5)}\ll X^\varepsilon\Bigg(\frac{HX^2}{T^{3/2}}+\frac{HX^2}{TQ^{1/2}}+HX
  +\frac{H^{\frac{1}{2}}XQ^{\frac{1}{2}}}{T}\Bigg).
\end{equation}
Putting (\ref{Sigma_2(5)-hui}) into (\ref{Sigma_2(4)-S_5}), we get
\begin{equation}\label{Sigma_2(4)-hui}
  \Sigma_2^{(4)}\ll X^\varepsilon\Bigg(\frac{HX^2}{T}+\frac{HX^2M}{T^{5/2}}+\frac{HX^2M}{T^2Q^{1/2}}
  +\frac{HXM}{T}+\frac{H^{\frac{1}{2}}XQ^{\frac{1}{2}}M}{T^2}\Bigg).
\end{equation}
Combining (\ref{Sigma_2(3)-suqare-S_4}) and (\ref{Sigma_2(4)-hui}), one has
\begin{equation}\label{Sigma_2(3)-hui}
  \Sigma_2^{(3)}\ll X^\varepsilon\Bigg(\frac{HX^\frac{3}{2}L^{\frac{1}{2}}}{T^{1/2}}+\frac{HX^2}{T^{5/4}}
                     +\frac{H^{\frac{1}{2}}X^2}{TQ^{1/4}}
                      +\frac{H^{\frac{3}{4}}X^{\frac{3}{2}}Q^{\frac{1}{4}}}{T}\Bigg).
\end{equation}
Inserting (\ref{Sigma_2(3)-hui}) into (\ref{Sigma_2(2)-S_3}), we derive that
\begin{equation*}
  \Sigma_2^{(2)}(\theta,T)\ll X^\varepsilon\Bigg(HXL+\frac{HX^{\frac{3}{2}}L^{\frac{1}{2}}}{T^{1/2}}
                    +\frac{HX^2}{T^{5/4}}+\frac{H^{\frac{1}{2}}X^2}{TQ^{1/4}}
                    +\frac{H^{\frac{3}{4}}X^{\frac{3}{2}}Q^{\frac{1}{4}}}{T}\Bigg),
\end{equation*}
which combines (\ref{Sigma_2-splitting}) and (\ref{Sigma_2(1)-max}) to get
\begin{align}\label{Sigma_2-typeII-last}
  \Sigma_2  \ll & \,\, X^\varepsilon\max_{D_0\ll T\ll D^2}
                  \Big(HXLT+HX^{\frac{3}{2}}L^{\frac{1}{2}}T^{\frac{1}{2}}
                  +HX^2T^{-\frac{1}{4}}+H^{\frac{1}{2}}X^2Q^{-\frac{1}{4}}
                  +H^{\frac{3}{4}}X^{\frac{3}{2}}Q^{\frac{1}{4}}\Big)
                        \nonumber \\
    \ll & \,\, X^\varepsilon\Big(HXLD^2+HX^{\frac{3}{2}}L^{\frac{1}{2}}D+HX^2D_0^{-\frac{1}{4}}
               +H^{\frac{1}{2}}X^2Q^{-\frac{1}{4}}+H^{\frac{3}{4}}X^{\frac{3}{2}}Q^{\frac{1}{4}}\Big).
\end{align}
From (\ref{Sigma_0-two-parts}), (\ref{Sigma_1-upper-2}) and (\ref{Sigma_2-typeII-last}), we obtain
\begin{align*}
              \Sigma_0
  \ll & \,\, X^{\varepsilon}\Big(D_0^6HX^2Q^{-1}+D_0^6HXL+D_0^2Q+HXLD^2+HX^{\frac{3}{2}}L^{\frac{1}{2}}D
                   \nonumber \\
   & \,\,\qquad +HX^2D_0^{-\frac{1}{4}}+H^{\frac{1}{2}}X^2Q^{-\frac{1}{4}}
                +H^{\frac{3}{4}}X^{\frac{3}{2}}Q^{\frac{1}{4}}\Big),
\end{align*}
which combines (\ref{S_(II)-fourth}) yields
\begin{align}\label{S_II-upper-last}
              S_{II}
  \ll & \,\,  X^\varepsilon \Big(HXL^{-\frac{1}{2}}+D_0^{\frac{3}{2}}HXQ^{-\frac{1}{4}}
                                 +D_0^{\frac{3}{2}}HX^{\frac{3}{4}}L^{\frac{1}{4}}
                                 +D_0^{\frac{1}{2}}Q^{\frac{1}{4}}H^{\frac{3}{4}}X^{\frac{1}{2}}
                                 +HX^{\frac{3}{4}}L^{\frac{1}{4}}D^{\frac{1}{2}}
                        \nonumber \\
     & \,\,\qquad  +HX^{\frac{7}{8}}L^{\frac{1}{8}}D^{\frac{1}{4}}+HXD_0^{-\frac{1}{16}}
                   +H^{\frac{7}{8}}XQ^{-\frac{1}{16}}+H^{\frac{15}{16}}X^{\frac{7}{8}}Q^{\frac{1}{16}}\Big)
                          \nonumber \\
  \ll & \,\,  X^\varepsilon \Big(HXu^{-\frac{1}{2}}+D_0^{\frac{3}{2}}HXQ^{-\frac{1}{4}}
                                 +D_0^{\frac{3}{2}}HX^{\frac{3}{4}}v^{\frac{1}{4}}
                                 +D_0^{\frac{1}{2}}Q^{\frac{1}{4}}H^{\frac{3}{4}}X^{\frac{1}{2}}
                                 +HX^{\frac{3}{4}}v^{\frac{1}{4}}D^{\frac{1}{2}}
                        \nonumber \\
     & \,\,\qquad  +HX^{\frac{7}{8}}v^{\frac{1}{8}}D^{\frac{1}{4}}+HXD_0^{-\frac{1}{16}}
                   +H^{\frac{7}{8}}XQ^{-\frac{1}{16}}+H^{\frac{15}{16}}X^{\frac{7}{8}}Q^{\frac{1}{16}}\Big).
\end{align}

\subsection{The Estimate of  Type I Sums }
In this subsection, we shall deal with the estimate of the sums of Type I. First, we have
\begin{equation*}
  S_{I}=\sum_{1\leqslant|k|\leqslant H}c(k)\sum_{\substack{d\leqslant D\\ (d,2)=1}}\xi(d)
        \sum_{M<m\leqslant M_1}a_m\sum_{\substack{L'<\ell\leqslant L_1'\\ m\ell+2\equiv0(\!\bmod d)}}
        e\big(\alpha m^2\ell^2k\big),
\end{equation*}
where
\begin{equation*}
 L'=\max\bigg(L,\frac{X}{2m}\bigg),\qquad L_1'=\min\bigg(L_1,\frac{X}{m}\bigg).
\end{equation*}
By a splitting argument, there holds
\begin{equation}\label{S_I-splitting}
 S_I\ll X^\varepsilon\cdot\max_{1\ll T\ll D}\Sigma_3,
\end{equation}
where
\begin{equation*}
\Sigma_3=\sum_{1\leqslant k\leqslant H}\sum_{\substack{T<d\leqslant2T\\ (d,2)=1}}
\sum_{\substack{M<m\leqslant M_1\\ (m,d)=1}}\Bigg|\sum_{\substack{L'<\ell\leqslant L_1'\\
      m\ell+2\equiv 0(\!\bmod d)}}e\big(\alpha m^2\ell^2k\big)\Bigg|.
\end{equation*}
For $(m,d)=1$, there exists $\overline{m}$, which satisfies $0\leqslant\overline{m}\leqslant d-1$, such that
$m\overline{m}\equiv1(\bmod d)$. Therefore, the equation $m\ell+2\equiv 0(\bmod d)$ is equivalent to
$\ell\equiv-2\overline{m}(\bmod d)$, i.e. $\ell=-2\overline{m}+dr$ for some $r\in\mathbb{Z}$. Then it follows from Cauchy's inequality that
\begin{align*}
              \big(\Sigma_3\big)^2
   \ll & \,\, HMT\sum_{1\leqslant k\leqslant H}\sum_{\substack{T<d\leqslant2T\\ (d,2)=1}}
              \sum_{\substack{M<m\leqslant M_1\\ (m,d)=1}}\Bigg|\sum_{\substack{L'<\ell\leqslant L_1'\\ m\ell+2\equiv0(\!\bmod d)}}e\big(\alpha m^2\ell^2k\big)\Bigg|^2
                      \nonumber \\
   \ll & \,\, HMT\sum_{1\leqslant k\leqslant H}\sum_{\substack{T<d\leqslant2T\\ (d,2)=1}}
              \sum_{\substack{M<m\leqslant M_1\\ (m,d)=1}}\Bigg|\sum_{\frac{L'+2\overline{m}}{d}<r\leqslant\frac{L_1'+2\overline{m}}{d}}
              e\big(\alpha m^2(-2\overline{m}+dr)^2k\big)\Bigg|^2
                      \nonumber \\
    =  & \,\, HMT\sum_{1\leqslant k\leqslant H}\sum_{\substack{T<d\leqslant2T\\ (d,2)=1}}
              \sum_{\substack{M<m\leqslant M_1\\ (m,d)=1}}
              \Bigg|\sum_{\frac{L'+2\overline{m}}{d}<r\leqslant\frac{L_1'+2\overline{m}}{d}}
              e\big(\alpha m^2(d^2r^2-4\overline{m}dr)k\big)\Bigg|^2
                      \nonumber \\
    =  & \,\, HMT\sum_{1\leqslant k\leqslant H}\sum_{\substack{T<d\leqslant2T\\ (d,2)=1}}
              \sum_{\substack{M<m\leqslant M_1\\ (m,d)=1}}
                      \nonumber \\
    & \,\,\qquad \quad\quad\times\sum_{\frac{L'+2\overline{m}}{d}<r_1,r_2\leqslant\frac{L_1'+2\overline{m}}{d}}
              e\Big(\alpha m^2\big(d^2(r_1^2-r_2^2)-4\overline{m}d(r_1-r_2)\big)k\Big).
\end{align*}
Set
\begin{equation*}
 R=\frac{L'+2\overline{m}}{d},\qquad \quad R_1=\frac{L_1'+2\overline{m}}{d}.
\end{equation*}
Then we have
\begin{align}\label{Sigma_3-square-(1)}
              \big(\Sigma_3\big)^2
  \ll & \,\,  X^\varepsilon H^2M^2LT+HMT\Bigg|\sum_{1\leqslant k\leqslant H}
              \sum_{\substack{T<d\leqslant2T\\ (d,2)=1}}\sum_{\substack{M<m\leqslant M_1\\ (m,d)=1}}
                       \nonumber \\
     & \,\,\qquad  \times  \sum_{\substack{R<r_1,r_2\leqslant R_1\\ r_1\not=r_2}}
               e\Big(\alpha m^2\big(d^2(r_1^2-r_2^2)-4\overline{m}d(r_1-r_2)\big)k\Big)\Bigg|
                       \nonumber \\
  \ll & \,\, X^\varepsilon H^2M^2LT+HMT\cdot\big|\Sigma_3^{(1)}\big|,
\end{align}
where
\begin{equation*}
\Sigma_3^{(1)}=\sum_{1\leqslant k\leqslant H}
              \sum_{\substack{T<d\leqslant2T\\ (d,2)=1}}\sum_{\substack{M<m\leqslant M_1\\ (m,d)=1}}
              \sum_{\substack{R<r_1,r_2\leqslant R_1\\ r_1\not=r_2}}
              e\Big(\alpha m^2\big(d^2(r_1^2-r_2^2)-4\overline{m}d(r_1-r_2)\big)k\Big).
\end{equation*}
For $\Sigma_3^{(1)}$, we have
\begin{align}\label{Sigma_3(1)-upper}
        \Sigma_3^{(1)}
  = & \,\, \sum_{1\leqslant k\leqslant H}\sum_{\substack{T<d\leqslant2T\\ (d,2)=1}}
           \sum_{\substack{M<m\leqslant M_1\\ (m,d)=1}}\sum_{s_1,s_2}
           \Bigg(\sum_{\substack{R<r_1,r_2\leqslant R_1\\ r_1-r_2=s_1\\ r_1+r_2=s_2\\ r_1\not=r_2}}1\Bigg)
           e\Big(\alpha m^2\big(d^2s_1s_2-4\overline{m}ds_1\big)k\Big)
                           \nonumber \\
  = & \,\, \sum_{1\leqslant k\leqslant H}\sum_{\substack{T<d\leqslant2T\\ (d,2)=1}}
           \sum_{\substack{M<m\leqslant M_1\\ (m,d)=1}}
           \sum_{\substack{s_1,\,\,s_2\\ 2R<s_1+s_2\leqslant2R_1\\ 2R<s_2-s_1\leqslant2R_1\\
              s_1\equiv s_2(\!\bmod 2),\, s_1\not=0}} e\Big(\alpha m^2\big(d^2s_1s_2-4\overline{m}ds_1\big)k\Big)
                          \nonumber \\
  \ll & \,\, \sum_{1\leqslant k\leqslant H}\sum_{\substack{T<d\leqslant2T\\ (d,2)=1}}
           \sum_{\substack{M<m\leqslant M_1\\ (m,d)=1}}\sum_{1\leqslant|s_1|\leqslant\frac{4L}{T}}
           \Bigg|\sum_{\substack{s_2:\,\,s_2\equiv s_1(\!\bmod 2)\\ 2R-s_1<s_2\leqslant2R_1-s_1\\
           2R+s_1<s_2\leqslant2R_1+s_1}}e\big(\alpha m^2d^2s_1s_2k\big)\Bigg|
                           \nonumber \\
   \ll & \,\, \sum_{1\leqslant k\leqslant H}\sum_{\substack{T<d\leqslant2T\\ (d,2)=1}}
              \sum_{\substack{M<m\leqslant M_1\\ (m,d)=1}}\sum_{1\leqslant|s_1|\leqslant\frac{4L}{T}}
              \Bigg|\sum_{\substack{R-s_1<t\leqslant R_1-s_1\\ R<t\leqslant R_1}}
              e\big(2\alpha m^2d^2s_1tk\big)\Bigg|.
\end{align}
Next, we will discuss the estimate of the right--hand side of (\ref{Sigma_3(1)-upper}) in two cases.

\textbf{Case 1.} Suppose that $MT\leqslant D_0$, and under this condition, there holds $1\ll M,T\ll D_0$.
By Lemma \ref{Vaughan-Lemma-2.2}, we have
\begin{align}\label{Sigma_3(1)-case-1}
             \Sigma_3^{(1)}
  \ll & \,\,  \sum_{1\leqslant k\leqslant H}\sum_{T<d\leqslant2T}\sum_{M<m\leqslant M_1}
              \sum_{1\leqslant s\leqslant8L}\min\bigg(L,\frac{1}{\|2\alpha m^2d^2sk \|}\bigg)
                        \nonumber \\
 \ll & \,\,  \sum_{1\leqslant k\leqslant H}\sum_{T<d\leqslant2T}\sum_{M<m\leqslant M_1}
              \sum_{1\leqslant s\leqslant8L}
              \min\bigg(\frac{256HM^2T^2L^2}{2 m^2d^2sk},\frac{1}{\|2\alpha m^2d^2sk\|}\bigg)
                       \nonumber \\
 \ll & \,\, \sum_{1\leqslant n\leqslant256HM^2T^2L}\tau_7(n)
            \min\bigg(\frac{HM^2T^2L^2}{n},\frac{1}{\|\alpha n\|}\bigg)
                       \nonumber \\
 \ll & \,\, X^\varepsilon HX^2T^2\bigg(\frac{1}{Q}+\frac{1}{L}+\frac{Q}{HX^2T^2}\bigg)
                       \nonumber \\
 \ll & \,\, X^\varepsilon\bigg(\frac{HX^2D_0^2}{Q}+HX(MT)T+Q\bigg)
                        \nonumber \\
 \ll & \,\, X^\varepsilon\bigg(\frac{HX^2D_0^2}{Q}+HXD_0^2+Q\bigg).
\end{align}
From (\ref{S_I-splitting}), (\ref{Sigma_3-square-(1)}) and (\ref{Sigma_3(1)-case-1}), we derive that,
under the condition $MT\leqslant D_0$, there holds
\begin{equation}\label{S_I-upper-case-1}
S_I\ll X^\varepsilon\Big(HX^{\frac{1}{2}}D_0^{\frac{1}{2}}+HXD_0^{\frac{3}{2}}Q^{-\frac{1}{2}}
            +HX^{\frac{1}{2}}D_0^{\frac{3}{2}}+H^{\frac{1}{2}}D_0^{\frac{1}{2}}Q^{\frac{1}{2}}\Big).
\end{equation}

\textbf{Case 2.} Now, we suppose that $MT>D_0$. Set
\begin{equation*}
  R'=\max\big(R,R-s_1\big),\qquad R_1'=\min\big(R_1,R_1-s_1\big).
\end{equation*}
Applying Lemma \ref{Bombieri-Iwaniec-lemma} to (\ref{Sigma_3(1)-upper}), we have
\begin{align}\label{Sigma_3(1)-Bom-Iwanoec}
              \Sigma_3^{(1)}
  \ll & \,\,  \sum_{1\leqslant k\leqslant H}\sum_{T<d\leqslant2T}\sum_{M<m\leqslant M_1}
              \sum_{1\leqslant|s_1|\leqslant\frac{4L}{T}}\Bigg|\sum_{R'<t\leqslant R_1'}
              e\big(2\alpha m^2d^2s_1tk\big)\Bigg|
                       \nonumber \\
  \ll & \,\, \sum_{1\leqslant k\leqslant H}\sum_{T<d\leqslant2T}\sum_{M<m\leqslant M_1}
              \sum_{1\leqslant|s_1|\leqslant\frac{4L}{T}}\int_{-\infty}^{+\infty}\mathcal{K}_1(\theta)
              \Bigg|\sum_{\frac{L}{4T}<t\leqslant\frac{4L}{T}}e\big(2\alpha m^2d^2s_1tk+\theta t\big)
              \Bigg|\mathrm{d}\theta
                       \nonumber \\
  = & \,\, \int_{-\infty}^{+\infty}\mathcal{K}_1(\theta)\cdot\Sigma_3^{(2)}(\theta,T)\mathrm{d}\theta,
\end{align}
where
\begin{equation*}
  \mathcal{K}_1(\theta)=\min\bigg(\frac{15L}{4T}+1,\frac{1}{\pi|\theta|},\frac{1}{\pi^2\theta^2}\bigg),
\end{equation*}
and
\begin{equation*}
      \Sigma_3^{(2)}(\theta,T)
   =  \sum_{1\leqslant k\leqslant H}\sum_{T<d\leqslant2T}\sum_{M<m\leqslant M_1}
      \sum_{1\leqslant|s_1|\leqslant\frac{4L}{T}}
      \Bigg|\sum_{\frac{L}{4T}<t\leqslant\frac{4L}{T}}e\big(2\alpha m^2d^2s_1tk+\theta t\big)\Bigg|.
\end{equation*}
According to (\ref{Bom-Iwaniec-1}) and (\ref{Sigma_3(1)-Bom-Iwanoec}), it is easy to see that
\begin{equation}\label{Sigma_3(1)-splitting-(2)}
   \Sigma_3^{(1)}\ll \mathscr{L} \cdot\max_{0\leqslant\theta\leqslant1}\Sigma_3^{(2)}(\theta,T).
\end{equation}
For $\Sigma_3^{(2)}(\theta,T)$, we have
\begin{equation*}
         \Sigma_3^{(2)}(\theta,T)
   \ll   \sum_{1\leqslant k\leqslant H}\sum_{MT<h\leqslant4MT}\tau(h)
         \sum_{1\leqslant s\leqslant\frac{4L}{T}}\Bigg|\sum_{\frac{L}{4T}<t\leqslant\frac{4L}{T}}
         e\big(2\alpha h^2stk+\theta t\big)\Bigg|.
\end{equation*}
It follows from Cauchy's inequality that
\begin{align}\label{Sigma_3(2)-square-(3)}
        & \,\,   \Big(\Sigma_3^{(2)}(\theta,T)\Big)^2
     \ll H\Bigg(\sum_{MT<h\leqslant4MT}\tau^2(h)\Bigg)\Bigg(\sum_{1\leqslant s\leqslant\frac{4L}{T}}1\Bigg)
               \sum_{1\leqslant k\leqslant H}\sum_{MT<h\leqslant4MT}
                           \nonumber \\
        & \,\,\qquad\qquad\qquad\qquad\quad\times\sum_{1\leqslant s\leqslant\frac{4L}{T}}
                \Bigg|\sum_{\frac{L}{4T}<t\leqslant\frac{4L}{T}}e\big(2\alpha h^2stk+\theta t\big)\Bigg|^2
                           \nonumber \\
    \ll & \,\, X^\varepsilon HML\sum_{1\leqslant k\leqslant H}\sum_{MT<h\leqslant4MT}
                \sum_{1\leqslant s\leqslant\frac{4L}{T}}
                \sum_{\frac{L}{4T}<t_1,t_2\leqslant\frac{4L}{T}}e\big((2\alpha h^2sk+\theta)(t_1-t_2)\big)
                           \nonumber \\
   \ll & \,\, X^\varepsilon HML\sum_{1\leqslant k\leqslant H}\sum_{1\leqslant s\leqslant\frac{4L}{T}}
               \sum_{\frac{L}{4T}<t_1,t_2\leqslant\frac{4L}{T}}
               \Bigg|\sum_{MT<h\leqslant4MT}e\big(2\alpha h^2sk(t_1-t_2)\big)\Bigg|
                             \nonumber \\
   \ll & \,\,\frac{X^\varepsilon H^2M^2L^3}{T}+X^\varepsilon HML\cdot\Sigma_3^{(3)},
\end{align}
where
\begin{equation*}
     \Sigma_3^{(3)}
   =\sum_{1\leqslant k\leqslant H}\sum_{1\leqslant s\leqslant\frac{4L}{T}}
    \sum_{\substack{\frac{L}{4T}<t_1,t_2\leqslant\frac{4L}{T}\\ t_1\not=t_2}}
    \Bigg|\sum_{MT<h\leqslant4MT}e\big(2\alpha h^2sk(t_1-t_2)\big)\Bigg|.
\end{equation*}
For $\Sigma_3^{(3)}$, we have
\begin{align*}
    \Sigma_3^{(3)}
  = & \,\, \sum_{1\leqslant k\leqslant H}\sum_{1\leqslant s\leqslant\frac{4L}{T}}
           \sum_{\substack{1\leqslant|r_1|\leqslant\frac{4L}{T}\\ 1\leqslant r_2\leqslant\frac{8L}{T}}}
           \Bigg(\sum_{\substack{\frac{L}{4T}<t_1,t_2\leqslant\frac{4L}{T}\\ t_1-t_2=r_1\\ t_1+t_2=r_2}}1\Bigg)
           \Bigg|\sum_{MT<h\leqslant4MT}e\big(2\alpha h^2skr_1\big)\Bigg|
                    \nonumber \\
\ll & \,\, \frac{L}{T}\sum_{1\leqslant k\leqslant H}\sum_{1\leqslant s\leqslant\frac{4L}{T}}
           \sum_{1\leqslant r_1\leqslant\frac{4L}{T}}\Bigg|\sum_{MT<h\leqslant4MT}e\big(2\alpha h^2skr_1\big)\Bigg|
                     \nonumber \\
\ll & \,\, \frac{L}{T}\sum_{1\leqslant n\leqslant\frac{32HL^2}{T^2}}\tau_4(n)
             \Bigg|\sum_{MT<h\leqslant4MT}e\big(\alpha h^2n\big)\Bigg|.
\end{align*}
Therefore, by Cauchy's inequality, one has
\begin{align}\label{sigma_3(3)-square-(4)}
             \Big(\Sigma_3^{(3)}\Big)^2
  \ll & \,\, \frac{L^2}{T^2}\Bigg(\sum_{1\leqslant n\leqslant\frac{32HL^2}{T^2}}\tau_4^2(n)\Bigg)
             \Bigg(\sum_{1\leqslant n\leqslant\frac{32HL^2}{T^2}}\Bigg|\sum_{MT<h\leqslant4MT}
             e\big(\alpha h^2n\big)\Bigg|^2\Bigg)
                     \nonumber \\
 \ll & \,\,\frac{X^\varepsilon HL^4}{T^4}\sum_{1\leqslant n\leqslant\frac{32HL^2}{T^2}}
           \sum_{\substack{MT<h_1,h_2\leqslant 4MT}}e\Big(\alpha\big(h_1^2-h_2^2\big)n\Big)
                     \nonumber \\
 \ll & \,\, \frac{X^\varepsilon H^2ML^6}{T^5}+\frac{X^\varepsilon HL^4}{T^4}\cdot\Sigma_3^{(4)},
\end{align}
where
\begin{equation*}
     \Sigma_3^{(4)}
    =\sum_{\substack{MT<h_1,h_2\leqslant4MT\\ h_1\not=h_2}}
    \Bigg|\sum_{1\leqslant n\leqslant\frac{32HL^2}{T^2}}e\Big(\alpha\big(h_1^2-h_2^2\big)n\Big)\Bigg|.
\end{equation*}
For $\Sigma_3^{(4)}$, by Lemma \ref{expo-Tri-lemma} we have
\begin{align}\label{Sigma_3(4)-upper-1}
           \Sigma_3^{(4)}
  = & \,\, \sum_{1\leqslant|t_1|,|t_2|\leqslant8MT}
            \Bigg(\sum_{\substack{MT<h_1,h_2\leqslant4MT\\ h_1-h_2=t_1\\ h_1+h_2=t_2}}1\Bigg)
            \Bigg|\sum_{1\leqslant n\leqslant\frac{32HL^2}{T^2}}e(\alpha t_1t_2n)\Bigg|
                       \nonumber \\
  \ll & \,\,  \sum_{1\leqslant t_1,t_2\leqslant8MT}
              \Bigg|\sum_{1\leqslant n\leqslant\frac{32HL^2}{T^2}}e(\alpha t_1t_2n)\Bigg|
                        \nonumber \\
 \ll & \,\, \sum_{1\leqslant t_1,t_2\leqslant8MT}\min\bigg(\frac{HL^2}{T^2},\frac{1}{\|\alpha t_1t_2\|}\bigg)
                       \nonumber \\
 \ll & \,\, \sum_{1\leqslant t\leqslant64M^2T^2}\tau(t)\min\bigg(\frac{HL^2}{T^2},\frac{1}{\|\alpha t\|}\bigg).
\end{align}
It follows from Lemma \ref{Vaughan-Lemma-2.2} that
\begin{align}\label{Sigma_3(4)-upper-2}
  & \,\, \sum_{1\leqslant t\leqslant64M^2T^2}\tau(t)\min\bigg(\frac{HL^2}{T^2},\frac{1}{\|\alpha t\|}\bigg)
                       \nonumber \\
  \ll & \,\, X^\varepsilon\sum_{1\leqslant t\leqslant64M^2T^2}
             \min\bigg(\frac{64HM^2L^2}{t},\frac{1}{\|\alpha t\|}\bigg)
                      \nonumber \\
  \ll & \,\, X^\varepsilon HM^2L^2\bigg(\frac{1}{Q}+\frac{T^2}{HL^2}+\frac{Q}{HM^2L^2}\bigg)
                      \nonumber \\
  \ll & \,\, X^\varepsilon \bigg(\frac{HX^2}{Q}+M^2T^2+Q\bigg).
\end{align}
From (\ref{sigma_3(3)-square-(4)}), (\ref{Sigma_3(4)-upper-1}) and (\ref{Sigma_3(4)-upper-2}), we derive that
\begin{equation*}
     \Sigma_3^{(3)}\ll X^\varepsilon
    \Bigg(\frac{HX^{\frac{1}{2}}L^{\frac{5}{2}}}{T^{5/2}}+\frac{HXL^2}{T^2Q^{1/2}}
          +\frac{H^{\frac{1}{2}}XL}{T}
          +\frac{H^{\frac{1}{2}}L^{2}Q^{\frac{1}{2}}}{T^2}\Bigg),
\end{equation*}
which combines (\ref{Sigma_3(2)-square-(3)}) yields
\begin{equation}\label{Sigma_3(2)-last-upper}
     \Sigma_3^{(2)}(\theta,T)\ll X^\varepsilon
    \Bigg(\frac{HXL^{\frac{1}{2}}}{T^{1/2}}+\frac{HX^{\frac{3}{4}}L^{\frac{5}{4}}}{T^{5/4}}
          +\frac{HXL}{TQ^{1/4}}
          +\frac{H^{\frac{3}{4}}X^{\frac{1}{2}}LQ^{\frac{1}{4}}}{T}\Bigg).
\end{equation}
From (\ref{Sigma_3-square-(1)}), (\ref{Sigma_3(1)-splitting-(2)}) and (\ref{Sigma_3(2)-last-upper}), we obtain
\begin{align*}
  \Sigma_3 \ll &\,\, X^\varepsilon \Big(HXL^{-\frac{1}{2}}T^{\frac{1}{2}}+HXL^{-\frac{1}{4}}T^{\frac{1}{4}}
                      +HX^{\frac{7}{8}}L^{\frac{1}{8}}T^{-\frac{1}{8}}+HXQ^{-\frac{1}{8}}
                      +H^{\frac{7}{8}}Q^{\frac{1}{8}}X^{\frac{3}{4}}\Big),
\end{align*}
from which and (\ref{S_I-splitting}), we derive that, under the condition $MT>D_0$, there holds
\begin{align}\label{S_I-upper-case-2}
             S_{I}
   \ll &\,\, X^\varepsilon \max_{1\ll T\ll D}\Bigg(\frac{HXT^{\frac{1}{2}}}{L^{1/2}}
                      +\frac{HXT^{\frac{1}{4}}}{L^{1/4}}
             +\frac{HX^{\frac{7}{8}}L^{\frac{1}{8}}}{T^{1/8}}+\frac{HX}{Q^{1/8}}
             +H^{\frac{7}{8}}Q^{\frac{1}{8}}X^{\frac{3}{4}}\Bigg)
                           \nonumber \\
  \ll &\,\, X^\varepsilon \Bigg(\frac{HXD^{\frac{1}{2}}}{w^{1/2}}
                      +\frac{HXD^{\frac{1}{4}}}{w^{1/4}}
             +\frac{HX^{\frac{7}{8}}L^{\frac{1}{8}}M^{\frac{1}{8}}}{(MT)^{1/8}}+\frac{HX}{Q^{1/8}}
             +H^{\frac{7}{8}}Q^{\frac{1}{8}}X^{\frac{3}{4}}\Bigg)
                           \nonumber \\
  \ll &\,\, X^\varepsilon \Big(HXw^{-\frac{1}{2}}D^{\frac{1}{2}}
                      +HXw^{-\frac{1}{4}}D^{\frac{1}{4}}
             +HXD_0^{-\frac{1}{8}}+HXQ^{-\frac{1}{8}}
             +H^{\frac{7}{8}}Q^{\frac{1}{8}}X^{\frac{3}{4}}\Big).
\end{align}

\subsection{Proof of Lemma \ref{exponential-lemma}}
From (\ref{S_II-upper-last}), (\ref{S_I-upper-case-1}) and (\ref{S_I-upper-case-2}), by taking
\begin{equation}\label{Q-choose}
      Q=X^{\frac{4138}{15}\theta},
\end{equation}
then we deduce that, under the condition (\ref{parameter-1}) and (\ref{parameter-2}), there holds
\begin{equation*}
     S_I\ll X^{1-\varpi}\qquad \textrm{and}\qquad  S_{II}\ll X^{1-\varpi}
\end{equation*}
for some $\varpi>0$. This completes the proof of Lemma \ref{exponential-lemma}.

\section*{Acknowledgement}

The authors would like to express the most sincere gratitude to Professor Wenguang
Zhai for his valuable advice and constant encouragement. Also, the authors appreciate
the referee for his/her patience in refereeing this paper. This work is supported by the National Natural Science
Foundation of China (Grant No. 11901566, 12001047, 11971476, 12071238), the Fundamental Research
Funds for the Central Universities (Grant No. 2019QS02), and the Scientific Research Funds of
Beijing Information Science and Technology University (Grant No. 2025035).

\end{document}